\theoremstyle{plain}
\newtheorem{them}{Theorem}[section]
\newtheorem{lem}[them]{Lemma}
\newtheorem*{them'}{Theorem}
\newtheorem{prop}[them]{Proposition}
\newtheorem{cor}[them]{Corollary}
\theoremstyle{definition}
\newtheorem{defi}{Definition}
\theoremstyle{remark}
\newtheorem{rem}{Remark}
\newtheorem{ex}{Example}
\author{Yanis Amirou}
\address{D\'{e}partement de math\'{e}matiques et applications, Ecole Normale Supérieure de Paris, PSL Research University, 75005 Paris, France}
\email{yanis.amirou@ens.fr}
\keywords{Word-length, word metrics, characteristic subgroups, FC-center, virtually abelian groups, torsion groups, group identities, Burnside groups, hyperbolic groups.}
\thanks{This work is partially supported by the ERC grant GroIsRan.}
\begin{document}
\title{Elements of uniformly bounded word-length in groups}

\maketitle
%===========  ABSTRACT  ===================================
\begin{abstract} We study a characteristic subgroup of finitely generated groups, consisting of elements with uniform upper bound for word-lengths. For a group $G$, we denote this subgroup by $G_{bound}$. We give sufficient criteria for triviality and finiteness of $G_{bound}$. We prove that if $G$ is virtually abelian then $G_{bound}$ is finite. In contrast with numerous examples where $G_{bound}$ is trivial, we show that for every finite group $A$, there exists an infinite group $G$ with $G_{bound}=A$. This group $G$ can be chosen among torsion groups. We also  study the group $G_{bound}(d)$ of elements with uniformly bounded word-lengths for generating sets of cardinality less  than $d$.
\end{abstract}
%
%=============== INTRODUCTION =================================
\section{Introduction}
Let $G$ be a finitely generated group and $S=\{s_{1},\dots ,s_{k}\}$ be a finite generating set  of $G$. The word-length of an element $g\in G$ with respect to $S$ is denoted by $l_{S}(g)$. It is the minimal integer $n$ such that there exist $s_{i_{1}},\dots, s_{i_{n}}$ in $S\cup S^{-1}$ satisfying $g=s_{i_{1}}\dots s_{i_{n}}$.\medskip

It is clear that the word-length function $l_{S}$ depends on the generating set $S$. If $S_{1}$ and $S_{2}$ are two finite generating sets of $G$, then the word-length functions $l_{S_{1}}$ and $l_{S_{2}}$ are bi-lipschitz equivalent. This means that there exist two constants $ C_{1}$ and $C_{2}$, such that $C_{1}l_{S_{2}}\leq l_{S_{1}}\leq C_{2} l_{S_{2}}$. Given a positive function $l:G\to \mathbb{R}^{+}$, bi-lipschitz equivalent to $l_{S_1}$, one can ask when $l$ is realisable as $l_{S_{2}}$ for some finite generating set $S_{2}$ of $G$. In other words, given a graph $\Gamma$ which is bi-lipschitz equivalent to a Cayley graph $\Gamma(G,S_1)$, is $\Gamma$ a Cayley graph $\Gamma(G,S_2)$ of $G$? There are numerous obstacles for such $l$ to be realisable. For example, it has been shown in \cite[Theorem~1.1]{Koubi} that non-elementary hyperbolic groups have uniform exponential growth. It has been demonstrated in \cite[Theorem~1]{Arz} that non-elementary hyperbolic groups, groups with a finite index subgroup which surrjects onto a non-abelian free group and free Burnside groups with a large enough exponent are uniformly non-ameanable. These notions give constraints on the word-length functions which are independent of the generating sets considered. So if the function $\ell $ does not respect these constraints, it is not a word-length function.
\\%
In this paper, we show that in some groups even the value of $l$ at one single element can be an obstacle for $l$ to be a word-length function. Our goal is to study such elements.
%========== DEFINITION 1 ======================================
\begin{defi}
An element $g\in G$ has \textit{uniformly bounded word-length} if there exists a constant $M>0$ such that $l_{S}(g)<M$ for every finite generating set $S$ of $G$. The set of elements of uniformly bounded word-length of $G$ is denoted by $G_{bound}$.
\end{defi}
We recall that the FC-center of a finitely generated group $G$ is the set of elements of $G$ with finite number of conjugates. We denote it by $FC(G)$.\\%
We recall that a \textit{characteristic subgroup} $H$ of a group $G$ is a subgroup such that $\phi(H)=H$ for every automorphism $\phi$ of $G$.
\\%
 It follows from the definition that the FC-center is a characteristic subgroup of $G$. It is known that every finitely generated group $G$ such that $FC(G)=G$ is virtually abelian, (see for instance \cite{Rob}, Theorem 4.32).
\\\\%
The notion of FC-center and FC-centrality are classical notions of group theory. We mention that these notions played recently an important
for the solution of Kaimanovich-Vershik conjecture by Frisch, Hartman, Tamuz and Vahidi-Ferdowsi (\cite{frisch-vahidi-tamuz}) who have shown that any non-FC-central group admits a measure with non-trivial Poisson boundary.  We also mention that following a work of Gournay \cite{Gournay} FC-central extensions have been recently used to construct new examples with Shalom's property $H_{FD}$ \cite{brieus-zheng}, \cite{anna-zheng}. FC-central extensions are used in \cite{anna-zheng}
to construct groups, where near optimal lower bound for growth function can not be obtained from the moment condition of measures with non-trivial Poisson boundary.
\\%
We show that the subgroup $G_{bound}$ that we study is a FC-central subgroup. It is well-known that a finitely generated group can have a vast variety of possible FC-centers. We mention a result of Hall that any countable abelian group is a center of some finitely generated group.  We also mention that taking FC-central extension can significantly change the geometry of a group: \cite{anna-zheng} studies extension of a lamplighter group with a much larger F\o{}lner function.
\\%
We stress however that FC-central elements (and even central elements) do not need to belong to $G_{bound}$. So the examples with non-trivial $G_{bound}$ that we study allows us to associate in a natural way a subgroup (of FC-center) that has stronger property than FC-centrality.
\\\\%
The fact that $G_{bound}$ is a characteristic subgroup of $G$ contained in $FC(G)$, see Lemma \ref{lm1}, implies that every finitely generated subgroup of $G_{bound}$ is virtually abelian see Corollary \ref{cr1}. In Section \ref{infinite girth} we give a sufficient criterion for triviality of $G_{bound}$ in terms of girth of $G$. Combining this condition with a result of Olshanskii and Sapir concerning the girth of hyperbolic groups, see Theorem 2 in \cite{Ol}, we obtain that $G_{bound}$ is trivial when $G$ is non-elementary hyperbolic. 
\\%
The main result in Section \ref{virtually abelien} is the following theorem: 
\begin{them'}
if $G $ is a finitely generated virtually abelian group, then $G_{bound}$ is finite.\end{them'}
This implies that for every finitely generated infinite group $G$, the subgroup $G_{bound}$ has infinite index, see Corollary \ref{cr2}. 
\\\\%
In Section \ref{prescription}, we prove the following. 
%
%=================== THEOREME 2===============================
\begin{them'} 
Let $A$ be a finite group, there exists a finitely generated infinite group $G$ such that $G_{bound}=A$. This group can be chosen among torsion groups.
\end{them'}
In the last Section, we provide examples of elements with prescribed length in finitely generated groups. In the same Section, we study a generalization of $G_{bound}$ by fixing the cardinality of generating sets considered in the definition of $G_{bound}$ . 
\\\\%
It seems natural to ask whether there exists a finitely generated group $G$ such that $G_{bound}$ is infinite.
%
%============== SECTION 2 ==============================
\section{Basic properties}\label{basic prop}

As we have mentioned, a straightforward consequence of the definition of $G_{bound}$ is:
%================== LEMME 1===========================
\begin{lem}\label{lm1} 
$G_{bound}$ is a characteristic subgroup of $G$.
\end{lem}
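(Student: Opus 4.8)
The plan is to verify everything directly from the definition: first that $G_{bound}$ is a subgroup, then that it is invariant under every automorphism of $G$. For the subgroup axioms I would argue as follows. The identity lies in $G_{bound}$ since $l_S(e)=0$ for every finite generating set $S$, so any $M>0$ works. If $g\in G_{bound}$ with uniform bound $M$, then, using that every generating set $S$ appearing in the definition is symmetric, one has $l_S(g^{-1})=l_S(g)<M$ for every such $S$, so $g^{-1}\in G_{bound}$ with the same constant. If $g,h\in G_{bound}$ with respective bounds $M_g,M_h$, then the subadditivity $l_S(gh)\le l_S(g)+l_S(h)<M_g+M_h$ holds for every finite generating set $S$, so $gh\in G_{bound}$ with bound $M_g+M_h$. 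Hence $G_{bound}$ is a subgroup.

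For the characteristic property, fix an automorphism $\phi$ of $G$. The key observation is that $\phi$ permutes the finite symmetric generating sets of $G$: if $S$ is one of them, then $\phi^{-1}(S)$ is finite, symmetric (since $\phi^{-1}(S^{-1})=\phi^{-1}(S)^{-1}$), and generates $G$. Moreover word-length is transported exactly by $\phi$: from $g=t_1\cdots t_n$ with $t_i\in\phi^{-1}(S)$ we get $\phi(g)=\phi(t_1)\cdots\phi(t_n)$ with $\phi(t_i)\in S$, and conversely, so $l_S(\phi(g))=l_{\phi^{-1}(S)}(g)$ for every finite symmetric generating set $S$.

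Combining these, if $g\in G_{bound}$ with uniform bound $M$, then for every finite symmetric generating set $S$ we obtain $l_S(\phi(g))=l_{\phi^{-1}(S)}(g)<M$; since $S$ was arbitrary, $\phi(g)\in G_{bound}$, with the \emph{same} constant $M$. Thus $\phi(G_{bound})\subseteq G_{bound}$, and applying this to $\phi^{-1}$ in place of $\phi$ gives the reverse inclusion, so $\phi(G_{bound})=G_{bound}$.

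The hard part will be essentially nonexistent — the whole statement is a matter of unwinding the definition. The only points deserving a line of care are that the admissible generating sets are symmetric (so inverses behave well) and finite (so that $\phi^{-1}(S)$ is again admissible), and that the identity $l_S(\phi(g))=l_{\phi^{-1}(S)}(g)$ is an exact equality rather than a bi-Lipschitz comparison, which is precisely what allows the single uniform bound $M$ to survive untouched under $\phi$.
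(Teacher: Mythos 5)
Your proof is correct and follows essentially the same route as the paper: subadditivity and symmetry for the subgroup axioms, and the exact identity $l_{S}(\phi(g))=l_{\phi^{-1}(S)}(g)$ to transport the uniform bound under any automorphism. The only (harmless) addition is your explicit application of the argument to $\phi^{-1}$ to upgrade $\phi(G_{bound})\subseteq G_{bound}$ to equality, which the paper leaves implicit.
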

\begin{proof}
It is clear that $e\in G_{bound}$. Consider $g\in G_{bound}$. We have $\sup_{S}l_{S}(g)=\sup_{S}l_{S}(g^{-1})$ because $l_{S}(g)=l_{S}(g^{-1})$. Therefore, if $g\in G_{bound}$, then $g^{-1}\in G_{bound}$.
\\%
Consider now $g_{1}, g_{2} \in G_{bound}$. Set $m_{1}=\sup_{S}l_{S}(g_{1})$ and $m_{2}=\sup_{S}l_{S}(g_{2})$. 
\\%
For every finite generating set $S$, we have $$l_{S}(g_{1}g_{2})\leq l_{S}(g_{1})+l_{S}(g_{2}) \leq m_{1}+m_{2}.$$ Thus, $g_{1}g_{2}\in G_{bound}$. Hence $G_{bound}$ is a subgroup of $G$.
\\%
Let us consider $g\in G_{bound}$ and a finite generating set $S=\{s_{1},\dots, s_{k} \}$ of $G$. There exists an expression $g=s_{i_{1}}\dots s_{i_{n}}$ whith $s_{i_{j}}\in S\cup S^{-1}$ for every $1\leq j\leq n$. Given an automorphism $A$ of $G$, we have $A(g)=A(s_{i_{1}}\dots s_{i_{n}})=A(s_{i_{1}})\dots A(s_{i_{n}})$  and the set $A(S)=\{A(s_{1}),\dots, A(s_{k})\}$ generates $G$. We remark that $l_{A(S)}(A(g))=l_{S}(g)$. For a finite generating set $E$ of $G$, $l_{E}(A(g))=l_{A^{-1}(E)}(g)$. We conclude that if $m=\sup_{S}l_{S}(g)$, then $l_{E}(A(g))=l_{A^{-1}(E)}(g)\leq m$. It implies that  $A(g)\in G_{bound}$ and $G_{bound}$ is a characteristic subgroup of $G$. 
\end{proof}
%
%=============== LEMME 2 =========================-
We recall that a group $G$ is residually finite if for every element $g\in G$, there exists a normal subgroup $H\lhd G$ with finite index such that $g\notin H$.
\begin{lem}
Let $G$ be a residually finite group and $g$ an element of $G$. 
There exists a constant $M>0$ such that for every finite generating set $S$, we have $  l_S(g) \leq M $, if and only if for every finite quotient $\pi: G\to Q$ of G, and a word-length function $\ell$  on $Q$, we have $\ell(\pi(g)) \leq M$. 
\end{lem}

\begin{proof} The first direction is trivial. Suppose that for every finite quotient $\pi: G\to Q$  of $G$ and every word-length function $\ell$  on $Q$, we have $\ell(\pi(g)) \leq M$ and there exists $S$ a finite generating set of $G$ such that  $l_S(g) > M$.  Consider a finite quotient in which the union of the ball of radius $M$ with respect to $l_S$ and the element ${g}$. The element $g$ is not in the ball of radius $M$ with respect to the projection of $l_S$ in $Q$. This is a contradiction.
\end{proof}

%-------------- LEMME 4  -------------------
\begin{lem}\label{proj}
Let $G$ and $H$ be two finitely generated groups such that there exists a surjectiv morphism $\pi: G\to H$.\\%
(i) For every finite generating set $T$ of $H$, there exists a finite generating set $S$ of $G$ such that for every $g\in G$, we have $l_{T}(\pi(g))\leq l_{S}(g)$ and $\pi(S)\subset T\cup \{e\}$.\\%
(ii) We have $\pi(G_{bound})\subset H_{bound}$.
\end{lem}
\begin{proof}
(i) Let $A$ be a finite generating set of $G$. Set $T=\{t_{1},\dots,t_{n}\}$. For every $j\in\{1,\dots,n\}$, consider $r_{j}\in G$ such that $\pi(r_j)=t_j$, and set $R=\{r_1,\dots,r_n\}$. Observe that $\pi(R)=T$.\\%
For every $a\in A$, there exists a word $w_a(T)$ over $T\cup T^{-1}$ such that $\pi(a)=w_a(T)$. Let $w_a(R)$ be the word over  $R\cup R^{-1}$ obtained from $w_a(T)$ by replacing $t_j$ by $r_j$. Consider $q_a=w_a(R)^{-1}a$. We have $a=w_a(R)q_a$ and $\pi(a)=w_a(T)$.\\%
Set $W=\{w_a(R)\}_{a\in A}\cup {q_a}$ and $S=R\cup \{q_a\}_{a\in A}$. Every element of $G$ can be expressed as word over $A\cup A^{-1}$ and hence as a word over $W\cup W^{-1}$. We conclude that $S$ generates $G$.\\%
Consider $g\in G$, $l=l_S(g)$ and $s_1,\dots,s_l\in S\cup S^{-1}$ such that $g=s_1,\dots,s_l$.\\%
Since for every  $i\in\{1,\dots,l\}$, we have $l_T(\pi(s_i))\in\{0,1\}$, then $l_{T}(\pi(g))\leq l$. It implies that for every $g\in G$, we have $$l_{T}(\pi(g))\leq l_S(g).$$
(ii) Consider $g\in G_{bound}$. There exists a constant $M_g\geq 0$ such that for every finite generating set $A$ of $G$, we have $l_A(g)\leq M_g$. Let $T$ be a finite generating set of $H$ and $S$ as in (i). We have $$l_{T}(\pi(g))\leq l_S(g)\leq M_g.$$
Hence $\pi(g)\in H_{bound}$.

\end{proof}

\begin{cor}\label{lm2} If $G=A\times B$, then  $G_{bound}\subset A_{bound}\times B_{bound}$.
\end{cor}
\begin{proof}
Let $\pi_{1}:G\to A$ and $\pi_{2}:G\to B$ be the standard projections. It follows from Lemma \ref{proj} that $G_{bound}\subset A_{bound}\times B_{bound}$.
\end{proof}
%==============================================

For some groups we do not have equality $(A\times B)_{bound}=A_{bound}\times B_{bound}$ as we see in the following example.
%================ EXEMPLE 1========================
\begin{ex}\label{Example1}
Consider  $A=\mathbb{Z}$ and $B=\mathbb{Z}/q\mathbb{Z}$ where $q$ is an integer such that $q>1$. Consider $G=A\times B$. We have $G_{bound}$ trivial and $A_{bound}\times B_{bound}=\{0\}\times \mathbb{Z}/q\mathbb{Z}$.
\\%
Indeed, since $B_{bound}=B$, the element $(0,1)$ is contained in $A_{bound}\times B_{bound}$. Let $p$ be a prime number such that $p>q+1$. It follows that $S=\{\pm (p,1),\pm (q+1,0)\}$ is a generating set of $G$. Hence 
$$(0,1)=(q+1)(p,1)-p(q+1,0).$$ 
Consequently,  we have $l_{S}(0,1)=p+q+1$. This implies that $l_{S}(0,1)$ tends to infinity when $p$ tends to infinity. Thus, $(0,1)$ is not in $G_{bound}$. It follows that $\{0\}\times B$, which equals 
$\{0\}\times B_{bound}$ is not contained in $G_{bound}$. Consequently,  $A_{bound}\times B_{bound}$ is not a subgroup of $G_{bound}$. 
\end{ex}
%
%================= LEMME 3 ==========================

\begin{lem}\label{autom}
Let $G$ be a group generated by $n$ elements and $g\in G_{bound}$. We have $\vert Aut(G).g\vert \leq n^M$ where $M=sup_{S}l_{S}(g)$.
\end{lem}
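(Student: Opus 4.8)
The plan is to observe that transporting $g$ by any automorphism keeps it inside one fixed ball of a word metric, and then to count the elements of that ball.

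First I would fix generators $s_{1},\dots ,s_{n}$ of $G$ and work with the associated finite symmetric generating set $S_{0}=\{s_{1}^{\pm 1},\dots ,s_{n}^{\pm 1}\}$. Let $\phi\in Aut(G)$ be arbitrary. Then $\phi^{-1}(S_{0})$ is again a finite generating set of $G$, so the computation already performed inside the proof of Lemma~\ref{lm1} gives $l_{S_{0}}(\phi(g))=l_{\phi^{-1}(S_{0})}(g)$. Since $g\in G_{bound}$ we have $l_{S}(g)\le M$ for every finite generating set $S$, in particular $l_{\phi^{-1}(S_{0})}(g)\le M$, so that $l_{S_{0}}(\phi(g))\le M$ for every $\phi\in Aut(G)$. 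In other words, the whole orbit $Aut(G).g$ is contained in the ball $B_{S_{0}}(M)=\{h\in G:\ l_{S_{0}}(h)\le M\}$.

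To conclude it then remains to bound $|B_{S_{0}}(M)|$: each of its elements is a product of at most $M$ letters taken from the generating set, hence is represented by a word of length at most $M$ over the $n$ generators, and there are at most $n^{M}$ such words; therefore $|Aut(G).g|\le |B_{S_{0}}(M)|\le n^{M}$. The only genuine content is the first step, and the point to be careful about is precisely that: an automorphism $\phi$ does distort the metric $l_{S_{0}}$ (it turns $S_{0}$ into $\phi(S_{0})$, not into $S_{0}$), yet the bound $M$ attached to $g$ is valid for all generating sets simultaneously, so $\phi(g)$ cannot leave $B_{S_{0}}(M)$. Once this is seen, the estimate on the size of a ball of a word metric is routine, and I do not expect any real obstacle.
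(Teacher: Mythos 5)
Your proposal is correct and follows essentially the same route as the paper: you use $l_{S}(\phi(g))=l_{\phi^{-1}(S)}(g)\le M$ (valid because the bound $M$ holds for all generating sets) to place the whole orbit $Aut(G).g$ inside the ball $B_{S}(M)$, and then bound the size of that ball by counting words, exactly as in the paper's proof. The crude count of the ball is stated at the same level of precision as in the paper, so there is nothing to add.
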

\begin{proof}
Consider $g\in G_{bound}$ and $M=sup_{S}l_{S}(g)$. Let $S$ be a generating set of $G$ such that $\vert S \vert=n$. For every automorphism $A\in Aut(G)$, the set $A(S)$ generates $G$ and $l_{A(S)}(A(g))=l_{S}(g).$
This implies  
$$l_{S}(A(g))=l_{A^{-1}(S)}(g)\leq M.$$
We conclude that the orbit $Aut(G).g$ is include in $B_{S}(M)$ the ball of radius $M$ in $G$ with respect to $l_{S}$. We know that $\vert B_{S}(M)\vert \leq n^M$. Finally, $\vert Aut(G).g\vert \leq n^M$.

\end{proof}
We will use Lemma \ref{autom} for finite groups like in the proof of Theorem \ref{th1}. But in the case of infinite groups, Lemma \ref{autom} implies that if the Aut-orbit of an element $g$ is infinite then $g\notin G_{bound}$. Notice that the converse is false. In fact every element of $G=\mathbb{Z}$ have a finite Aut-orbit, but $G_{bound}$ is trivial, as will be shown in Lemma \ref{lm5}.
\begin{cor}\label{lm3} If $G$ is a finitely generated group, then $G_{bound} $ is a subgroup of $ FC(G)$.
\end{cor}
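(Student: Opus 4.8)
The plan is to deduce this immediately from Lemma \ref{autom}. Fix $g\in G_{bound}$ and set $M=\sup_{S}l_{S}(g)<\infty$. Since $G$ is finitely generated, we may choose a generating set $S$ with $\vert S\vert=n$ for some finite $n$, and Lemma \ref{autom} gives $\vert \mathrm{Aut}(G).g\vert\leq n^{M}$; in particular the full $\mathrm{Aut}(G)$-orbit of $g$ is finite. The key observation is then that every inner automorphism of $G$ is an automorphism, so the conjugacy class of $g$, namely $\{hgh^{-1}:h\in G\}=\mathrm{Inn}(G).g$, is contained in $\mathrm{Aut}(G).g$ and is therefore finite. By definition this means $g\in FC(G)$.

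Having shown $g\in FC(G)$ for every $g\in G_{bound}$, I would conclude by recalling that $G_{bound}$ is a subgroup of $G$ by Lemma \ref{lm1} and that $FC(G)$ is a subgroup of $G$; since $G_{bound}\subseteq FC(G)$ as sets, it follows that $G_{bound}$ is a subgroup of $FC(G)$.

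There is essentially no obstacle here: the statement is a direct corollary of the bound on the size of the $\mathrm{Aut}(G)$-orbit established in Lemma \ref{autom}, combined with the elementary fact that conjugation orbits are contained in automorphism orbits. The only points that deserve an explicit mention are that finite generation is used to ensure $n<\infty$ (so that balls $B_{S}(M)$ are finite), and that the passage from "finite $\mathrm{Aut}(G)$-orbit" to "finite conjugacy class" uses $\mathrm{Inn}(G)\leq \mathrm{Aut}(G)$.
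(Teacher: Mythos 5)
Your argument is correct and is essentially the paper's own proof: both deduce the statement from Lemma \ref{autom} via the observation that inner automorphisms are automorphisms, so a finite $\mathrm{Aut}(G)$-orbit forces a finite conjugacy class, i.e.\ membership in $FC(G)$. You merely spell out the details (the bound $n^{M}$, the inclusion $\mathrm{Inn}(G)\leq \mathrm{Aut}(G)$, and the subgroup structure from Lemma \ref{lm1}) a bit more explicitly than the paper does.
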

\begin{proof}
Since elements of $FC(G)$ are those which have finite orbit under the action of $Inn(G)$, it follows from Lemma \ref{autom} that $G_{bound}\subset FC(G)$.

\end{proof}
We mention that some finitely generated groups can have a large FC-center. For example a class of $FC$ central extensions of 
$\mathbb{Z} \wr \mathbb{Z}/2\mathbb{Z}$ is rich, 
and contains groups with arbitrily large F\o{}lner function (see Brieusel Zheng \cite{Brieussel-Zheng}, see also \cite{anna-zheng}).
%===============================================
%==================-CORROLAIRE 1==================-
\begin{cor}\label{cr1} 
Let $G$ be a finitely generated group. Every finitely generated subgroup of  $G_{bound}$ is virtually abelian.
\end{cor}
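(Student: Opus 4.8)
The plan is to combine Corollary \ref{lm3} with the structural fact about finitely generated FC-groups quoted in the introduction (\cite{Rob}, Theorem 4.32). Let $H$ be a finitely generated subgroup of $G_{bound}$. By Corollary \ref{lm3}, $G_{bound}\subset FC(G)$, so every element $h\in H$ has only finitely many conjugates in $G$.

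The key observation is that this forces $FC(H)=H$. Indeed, for $h\in H$, the set of $H$-conjugates $\{xhx^{-1}:x\in H\}$ is a subset of the set of $G$-conjugates $\{ghg^{-1}:g\in G\}$, which is finite; hence $h$ has finitely many conjugates in $H$, i.e. $h\in FC(H)$. Since this holds for every $h\in H$, we get $FC(H)=H$.

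Now $H$ is finitely generated and coincides with its own FC-center, so by Theorem 4.32 of \cite{Rob} it is virtually abelian. I do not expect any real obstacle here: the only point requiring a line of care is the passage from "finitely many conjugates in $G$" to "finitely many conjugates in $H$", which is immediate from the inclusion of conjugacy classes noted above, and then the cited theorem does the rest.
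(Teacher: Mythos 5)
Your argument is correct and coincides with the paper's own proof: both pass through $H\subset G_{bound}\subset FC(G)$, use the inclusion $FC(G)\cap H\subset FC(H)$ (which you justify by the containment of $H$-conjugacy classes in $G$-conjugacy classes) to conclude $FC(H)=H$, and then invoke the theorem from \cite{Rob} that a finitely generated group equal to its FC-center is virtually abelian. No gaps; your spelled-out justification of the inclusion is the only difference, and it is exactly the step the paper states without comment.
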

\begin{proof}
Let $H$ be a  finitely generated subgroup of $G_{bound}$. Since ($FC(G)\cap H$) $\subset FC(H)$ and $H\subset G_{bound} \subset FC(G)$, then $H\subset FC(H)$. Therefore, $H=FC(H)$. The claim that $H$ is virtually abelian follows from Theorem  4.3.1 in \cite{Rob}. This theorem asserts that a finitely generated group $G$  such that $FC(G)=G$ is virtually abelian.\\
\end{proof}
%================2.1 INFINITE-GIRTH =================
\section{Groups with infinite girth}\label{infinite girth}
Consider a group $G$ generated by a finite set $S$. We recall that a simple loop in the corresponding Cayley graph $\Gamma (G,S)$, is a non-trivial path starting and ending at $e$ without self intersections. 
The \textit{girth} of a Cayley graph $ \Gamma(G,S)$ is the minimal length of a simple loop in this graph.
A group has \textit{infinite girth} if for every fixed constant $k>0$, there is a finite generating set $S$ of $G$, such that the girth of $\Gamma( G,S)$ is greater than $k$.
%============ REMARQUE 1 ===========================
\begin{lem}\label{loop}
Let $G$ be a finitely generated group with finite generating set $S$ and $\Gamma(G,S)$ the corresponding Cayley graph. Suppose $g\in G$ an element of order $n$ and $w$ a word with respect to $S$ representing $g$ with minimal length $k$. Consider $w'$ the shortest word among conjugates of $w$. The path obtained by concatenation of $n$ times $w'$ is a simple loop.
\end{lem}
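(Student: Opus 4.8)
The plan is to check first that the stated path really is a closed loop, and then to exclude self-intersections using the minimality defining $w'$, which I read as: $w'$ has shortest length among all words representing a conjugate of $g$, so its length $k'$ equals $\min\{\,l_S(g'') : g''\text{ is conjugate to }g\,\}\le k$. Write $w'=a_1\cdots a_{k'}$ with $a_j\in S$, let $g'\in G$ be the element it represents, and put $p_i=a_1\cdots a_i$ and $s_i=a_{i+1}\cdots a_{k'}$ (so $p_0=e$, $p_{k'}=g'$, $g'=p_is_i$). Since $g'$ is conjugate to $g$ it has order $n$, so $(g')^n=e$, and the concatenation of $n$ copies of $w'$ traced from $e$ closes up at $(g')^n=e$: it is a loop of length $nk'$ whose vertices, read in order, are the elements $(g')^m p_i$ with $0\le m\le n-1$, $0\le i\le k'$. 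The first thing I would record is that every cyclic rotation $\hat w_i:=a_{i+1}\cdots a_{k'}a_1\cdots a_i$ of $w'$ is geodesic: it represents the conjugate $s_i g' s_i^{-1}=s_i p_i$ of $g'$, hence of $g$, so the element it represents has word length $\le k'$, while minimality of $w'$ gives word length $\ge k'$; therefore $\hat w_i$ is geodesic, and in particular no nonempty proper subword of $w'$, nor of any $\hat w_i$, represents $e$.

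Next I would suppose, towards a contradiction, that the loop meets itself: $(g')^{m_1}p_{i_1}=(g')^{m_2}p_{i_2}$ with $0\le m_1\le m_2\le n-1$, $0\le i_1,i_2\le k'-1$ and $(m_1,i_1)\ne(m_2,i_2)$; set $m=m_2-m_1$, so $(g')^m=p_{i_1}p_{i_2}^{-1}$. If $m=0$ then $p_{i_1}=p_{i_2}$ with $i_1\ne i_2$, making a nonempty subword of $w'$ trivial --- contradiction. If $m=1$ then $g'=p_{i_1}p_{i_2}^{-1}$; as $i_1\ne i_2$ one of $p_{i_1},p_{i_2}$ is a prefix of the other, so $g'=p_j\,v^{\pm1}\,p_j^{-1}$ where $j=\min(i_1,i_2)$ and $v=a_{j+1}\cdots a_{\max(i_1,i_2)}$ is a nonempty subword of $w'$ of length $\le k'-1$. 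Then $g$ is conjugate to an element of word length $\le k'-1<k'$, contradicting the minimality of $w'$.

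The remaining case $m\ge2$ --- showing that the loop does not wind around itself more than once --- is the step I expect to be the real obstacle. Following the loop from one occurrence of the repeated vertex to the other gives a relation $s_{i_1}(g')^{m-1}p_{i_2}=e$, which after conjugation rewrites as $\hat g^{\,m}=p_{i_2}^{-1}p_{i_1}$, where $\hat g:=p_{i_1}^{-1}g'p_{i_1}$ is represented by a cyclic rotation of $w'$ and the right-hand side is a word of length $\le 2k'-2$ in the letters of $w'$. One then wants to combine the geodesy of the rotations of $w'$ with the exact order of $g'$ --- presumably with a case split according to $\gcd(m,n)$ --- to again produce a conjugate of $g$ of word length $<k'$, contradicting minimality; this bookkeeping is the part I would have to be most careful about. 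In any event, for the intended use in the girth criterion it is enough to observe that the concatenation of $n$ copies of $w'$ is a nontrivial closed walk of length $nk'\le n\,l_S(g)$, hence contains an embedded cycle of length at most $n\,l_S(g)$, so that $\Gamma(G,S)$ has a simple loop of length at most $n\,l_S(g)$.
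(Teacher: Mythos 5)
Your suspicion about the case $m\ge 2$ is exactly right, and it is a genuine gap for the lemma as stated --- but not one that better bookkeeping can close, because the statement is false in that generality. Take $G=\mathbb{Z}/5\mathbb{Z}=\langle t\rangle$ with $S=\{t^{\pm1}\}$ and $g=t^2$, of order $n=5$. The minimal word is $w=tt$, and since $G$ is abelian conjugation gains nothing, so $w'=tt$ with $k'=2$. The concatenation of five copies of $w'$ is the walk spelling $t^{10}$, which wraps around the pentagon twice: it already returns to $e$ after five steps (in your notation $(g')^2p_1=t^5=e=(g')^0p_0$, a self-intersection with offset $m=2$), and every vertex is visited twice. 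Minimality of $w'$ in the conjugacy class of $g$ controls the length of conjugates of $g$, but says nothing about the conjugacy classes of the proper powers $(g')^m$, which is precisely what the $m\ge2$ case would require; that is why no case split on $\gcd(m,n)$ can rescue it. (Your $m=0$ and $m=1$ cases, and the geodesicity of the cyclic rotations, are fine.)

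For comparison, the paper's own proof establishes strictly less than your partial argument: it only rules out subwords $ss^{-1}$ of $w'^{\,n}$, i.e.\ shows the closed walk is cyclically reduced, and then calls it simple without addressing repeated vertices at all --- and by the example above it could not do more. The statement that is true, and is all that Lemma \ref{girth-Gb} uses, is essentially your closing fallback, but it needs the reducedness you already proved: a ``nontrivial closed walk'' by itself need not contain an embedded cycle (a walk out and back along a geodesic contains none), whereas a reduced nonempty closed walk does. Reducedness of the cyclic word $w'^{\,n}$ follows from your observation that every window of length at most $k'$ in it is a subword of some rotation $\hat w_i$, which is geodesic (or simply from the fact that $w'$ is a nonempty reduced word, so $w'^{\,n}$ is not freely trivial). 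So the correct repair is to weaken Lemma \ref{loop} to: the concatenation of $n$ copies of $w'$ is a reduced nonempty closed walk, hence $\Gamma(G,S)$ contains a simple loop of length at most $n\,l_S(g)$; with that justification added, your fallback is a correct --- and in fact more careful --- version of what the paper's argument actually delivers, and the girth criterion goes through unchanged.
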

\begin{proof}
Suppose that $w'=uwu^{-1}$ is the shortest word among conjugates of $w$. Assume that there exist a sub-word of $w'^{n}$ having the form $ss^{-1}$ where $s\in S$, then there exists a sub-word $w''$ of $w'$ satisfying $w'=sw''s^{-1}$. It follows that there exists a word $u$ such that $w=u^{-1}sw''s^{-1}u$ and we obtain that $w''$ is a shorter conjugate of $w$ than $w'$. It is a contradiction. We conclude that $w'^{n}$ is a simple loop.
\end{proof}
%========== LEMME 4===============================

\begin{lem}\label{girth-Gb}
If a group $G$ has infinite girth, then $G_{bound}$ does not contain torsion elements.
\end{lem}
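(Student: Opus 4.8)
The plan is to show the contrapositive in spirit: if $g \in G_{bound}$ has finite order $n > 1$, then $G$ cannot have infinite girth. Suppose such a $g$ exists, and let $M = \sup_S l_S(g)$ be its uniform word-length bound. The key tool is Lemma~\ref{loop}: for any finite generating set $S$, if $w$ is a geodesic word for $g$ of length $k = l_S(g)$ and $w'$ is the shortest word among the conjugates of $w$, then $w'^n$ is a simple loop in $\Gamma(G,S)$. Its length is $n \cdot |w'| \leq n \cdot |w| = n \cdot l_S(g) \leq nM$, since $w'$, being a conjugate of a word representing $g$, has length at most that of $w$ — indeed, $w$ itself is among the conjugates of $w$, so the shortest one is no longer. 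Hence the girth of $\Gamma(G,S)$ is at most $nM$.

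The point is that $n$ and $M$ are constants depending only on $g$, not on $S$. So we have produced a uniform upper bound $nM$ on the girth of $\Gamma(G,S)$ that holds for \emph{every} finite generating set $S$ of $G$. This directly contradicts the definition of infinite girth, which requires that for every constant $k$ there is some generating set whose Cayley graph has girth exceeding $k$. Therefore no nontrivial torsion element can lie in $G_{bound}$, which is the claim.

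I do not expect a serious obstacle here; the statement is essentially an immediate combination of Lemma~\ref{loop} with the definitions of $G_{bound}$ and of infinite girth. The one point that needs a word of care is the inequality $|w'| \leq l_S(g)$: one must observe that the family of conjugates of $w$ over which $w'$ is minimized includes $w$ itself (the trivial conjugation $u = e$), so that $w'^n$ has length at most $n \, l_S(g)$; combined with $l_S(g) \le M$ this gives the uniform bound. It is also worth noting explicitly that $w'^n$ being a \emph{simple} loop of length $n|w'|$ shows the girth is at most $n|w'|$ (a simple loop through $e$ of that length exists), which is all we need.
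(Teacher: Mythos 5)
Your proof is correct and follows essentially the same route as the paper: assume a torsion element $g$ of order $n$ lies in $G_{bound}$ with uniform bound $M$, apply Lemma~\ref{loop} to a geodesic word for $g$ and its shortest conjugate $w'$ to get a simple loop $w'^{n}$ of length at most $nM$ in every Cayley graph, contradicting infinite girth. The extra observation that $|w'|\leq l_{S}(g)$ (since $w$ is among its own conjugates) is a welcome point of care that the paper leaves implicit.
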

\begin{proof}
 Consider $g\in G$, such that there exists a minimal integer $n(g)>1$ satisfying $g^{n(g)}=e$. Assume that  $g\in G_{bound}$. There exists some constant $M>0$ such that for every finite generating set $S=\{s_{1},\dots,s_{m}\}$ of $G$, we have $l_{S}(g)\leq M$. Therefore, for every generating set $S$ of $G$, we have $l_{S}(g^{n(g)})\leq n(g)M$. We know that there exists a word $w$ over $S$, representing $g$, such that $w=s_{i_{1}}\dots s_{i_{k}}$ corresponding to a path starting at e and ending at $g$ in the Cayley graph $\Gamma(G,S)$. Let $w'$ be the shortest word among conjugates of $w$. The path corresponding to the concatenation of $n$ copies of the  word $w'\dots w'$, starts and ends in $e$. It is a non-trivial simple loop as shown in Lemma \ref{loop}.
Therefore, for every finite generating set, we have a loop of length bounded by $n(g)M$. It contradicts the infinite girth of $G$.
\end{proof}
%=================-Corrolaire 2 girth grigorchuk ===========
\begin{cor}

If $G$ is a finitely generated torsion group with infinite girth, then $G_{bound}$ is trivial.

\end{cor}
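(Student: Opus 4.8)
The statement is a direct consequence of Lemma~\ref{girth-Gb}, so the ``proof'' is really just an unwinding of definitions, and I would present it as such. First recall that $G_{bound}$ is a subgroup of $G$ by Lemma~\ref{lm1}, so every element of $G_{bound}$ lies in $G$. Since $G$ is a torsion group by hypothesis, every element of $G$, and a fortiori every element of $G_{bound}$, has finite order.

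On the other hand, $G$ has infinite girth, so Lemma~\ref{girth-Gb} tells us that $G_{bound}$ contains no nontrivial element of finite order. Combining the two observations, an arbitrary $g\in G_{bound}$ is an element of finite order lying in a subgroup with no nontrivial torsion, hence $g=e$. Therefore $G_{bound}=\{e\}$, which is exactly the claim.

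I do not expect any real obstacle here: all the content is already in Lemma~\ref{girth-Gb}, where a uniform bound on the word-length of a torsion element is converted, via the simple-loop construction of Lemma~\ref{loop}, into a uniform bound on the girth over all finite generating sets. For completeness I would also mention an alternative route through Corollary~\ref{cr1}: any finitely generated subgroup of $G_{bound}$ would be virtually abelian and torsion, hence finite, so a nontrivial $G_{bound}$ would contain a nontrivial finite-order element, again contradicting Lemma~\ref{girth-Gb}. This detour is strictly longer, so in the write-up I would keep only the one-line argument above.
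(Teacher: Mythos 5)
Your argument is correct and is exactly the intended one: the paper states this corollary without proof because, as you note, it is the immediate combination of the torsion hypothesis with Lemma~\ref{girth-Gb}, which rules out nontrivial torsion elements in $G_{bound}$. No difference in approach, and no gap.
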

There are numerous examples of groups satisfying Lemma \ref{girth-Gb}. It is the case for the first Grigorchuk group.
%==============-fin corrolaire ============================
In fact, this group is a torsion group see Theorem 2.1 in  \cite{JI}, (see also Chapter 8, Theorem 17 in \cite{Del}) and has infinite girth as shown in  corollary 6.12 in \cite{Anna}.\\%
%================fin preuve -================================
But a stronger statement holds for the first Grigorchuk group. 
We denote by $G$ the first Grigorchuk group. This group has trivial FC-center. In fact, it is a "just infinite" group, see Theorem 8.1 in \cite{JI}. It means that every normal subgroup has finite index. Since the FC-center of $G$ is a normal subgroup of $G$, if it is non-trivial, then it has finite index. This implies that $FC(G)$ is finitely generated. Since finitely generated subgroups of the FC-center are virtually abelian, we can conclude that $FC(G)$ is virtually abelian. Since finitely generated infinite abelian groups contain elements of infinite order, and since the first Grigorchuk group is a torsion group, we conclude $FC(G)$ is trivial.
\\\\%
However, there are examples with non-trivial FC-center and the Lemma \ref{girth-Gb} remains applicable. We want to prove that it is the case for hyperbolic groups.
%---------------------------------------Groupes Hyperboliques-----------------------
\\\\%
We recall that a finitely generated group is hyperbolic if every geodesic triangle in its Cayley graph is $\delta$-thin. This means that for a geodesic triangle, every side is contained in the $\delta$-neighbourhood of the union of the two other sides. A hyperbolic group is said elementary if it is virtually cyclic or finite.
%=======================================================
%============ DEFINITION 2 =============================
For properties of hyperbolic groups see \cite{Grom}, see also \cite{Gh}.
%================= LEMME 5 =============================
%

\begin{lem}\label{lm4} If G is a non-elementary hyperbolic group, then $FC(G)$ is finite.
\end{lem}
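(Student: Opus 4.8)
The plan is to argue by contradiction: assume $FC(G)$ is infinite and derive a contradiction, splitting into the cases according to whether $FC(G)$ contains an element of infinite order.

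First I would treat the infinite-order case. If $g\in FC(G)$ has infinite order, then by definition of the FC-center $g$ has only finitely many conjugates, so the centralizer $C_{G}(g)$ has finite index in $G$. The key geometric input is that in a hyperbolic group the centralizer of an infinite-order element is virtually cyclic: the cyclic subgroup $\langle g\rangle$ is a quasi-geodesic with two well-defined distinct endpoints on the Gromov boundary, every element commuting with $g$ must fix this pair of boundary points, and the stabilizer of a pair of boundary points of a hyperbolic group is virtually cyclic (equivalently, $C_{G}(g)$ sits inside the unique maximal elementary subgroup $E(g)\supseteq\langle g\rangle$; see \cite{Grom} and \cite{Gh}). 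Hence $G$ would contain a finite-index virtually cyclic subgroup, so $G$ itself would be virtually cyclic, contradicting the assumption that $G$ is non-elementary. Therefore $FC(G)$ must be a torsion group.

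Next I would rule out the possibility that $FC(G)$ is an infinite torsion group. One route mirrors the argument behind Corollary \ref{cr1}: for any finitely generated subgroup $H\leq FC(G)$ one has $FC(G)\cap H\subseteq FC(H)$ and $H\subseteq FC(G)$, hence $H=FC(H)$, so $H$ is virtually abelian by Theorem 4.3.1 of \cite{Rob}; being in addition a finitely generated torsion virtually abelian group, $H$ is finite. Thus $FC(G)$ is locally finite. Then I would invoke the fact that a hyperbolic group has only finitely many conjugacy classes of finite subgroups, hence an upper bound $N$ on the order of any of its finite subgroups; but an infinite locally finite group contains a strictly increasing chain of finite subgroups (start with a cyclic subgroup and repeatedly adjoin an element outside it, staying finite by local finiteness), and therefore has finite subgroups of arbitrarily large order — a contradiction. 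Alternatively one can quote directly that every subgroup of a hyperbolic group is either virtually cyclic or contains a non-abelian free subgroup, and note that an infinite torsion group is of neither kind (infinite virtually cyclic groups have elements of infinite order, and free groups are torsion-free). Either way $FC(G)$ is finite.

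The main obstacle is the geometric fact used in the first step — that centralizers of infinite-order elements in hyperbolic groups are virtually cyclic, i.e. the existence and virtual cyclicity of the maximal elementary subgroup $E(g)$ — together, in the second step, with the boundedness of the orders of finite subgroups of a hyperbolic group. Both are standard, so once they are cited correctly the remaining work is only bookkeeping with the FC-center and the classification of finitely generated virtually abelian torsion groups.
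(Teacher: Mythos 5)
Your proof is correct, but it follows a genuinely different route from the paper. The paper's argument is shorter and uses the normality of $FC(G)$ in an essential way: it quotes the fact that an infinite subgroup of a hyperbolic group must contain a hyperbolic (infinite-order) element (Corollary 36, Chapter 8 in \cite{Gh}) — which already subsumes your torsion case — and then applies Koubi's result (Proposition 5.5 in \cite{Koubi}) that for a hyperbolic element $h$ there are $x\in G$ and $n_{1}$ with $\langle xh^{n_{1}}x^{-1},h^{n_{1}}\rangle$ free of rank two; since $FC(G)$ is normal, both generators lie in $FC(G)$, and a group all of whose elements have finitely many conjugates cannot contain a free group of rank two. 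You instead split into two cases and never use normality: in the infinite-order case you use that $C_{G}(g)$ has finite index together with the virtual cyclicity of centralizers of loxodromic elements (stabilizers of a pair of boundary points) to force $G$ to be elementary; in the torsion case you show $FC(G)$ is locally finite via the $H=FC(H)$ argument of Corollary \ref{cr1} plus the finiteness of finitely generated torsion virtually abelian groups, and then contradict the bound on orders of finite subgroups of a hyperbolic group (or, alternatively, the elementary-versus-free dichotomy for subgroups). Both routes rest on standard hyperbolic-group facts; the paper's buys brevity by exploiting normality and a single free-subgroup construction, while yours trades that for heavier standard inputs (virtually cyclic centralizers, bounded finite subgroups) but needs nothing beyond the definition of the FC-center, and its torsion-case analysis is tailored to $FC(G)$ rather than to arbitrary subgroups.
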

%================ FIN LEMME 4============================
\begin{proof}
 We know that a subgroup of a hyperbolic group is infinite if and only if it contains a hyperbolic element (i,e element of infinite order), see for example Corollary 36, Chapter 8 in \cite{Gh}. We know also that if $h\in G$ is a hyperbolic element, we can find an integer $n_{1}\in \mathbb{N}$ and $x\in G$ such that the subgroup generated by $\{xh^{n_{1}}x^{-1},h^{n_{1}}\}$ is free of rank two, see proposition 5.5 in  \cite{Koubi}.  Since $FC(G)$ is normal in $G$, it follows that if $FC(G)$ is infinite, it contains a free subgroup of rank two. This is impossible because, by definition, every element of $FC(G)$ has finitely many conjugates.\\%
We conclude that $FC(G)$ is finite.
\end{proof}
%==========================================================
%===== CORROLAIRE 3 groupes Hyperbolis -trivial gbound-----
\begin{cor}
Non-elementary hyperbolic groups have trivial $G_{bound}$.
\end{cor}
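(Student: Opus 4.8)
The plan is to assemble the statement from three ingredients already in hand: the containment $G_{bound}\subseteq FC(G)$ from Corollary~\ref{lm3}, the finiteness of $FC(G)$ for non-elementary hyperbolic groups from Lemma~\ref{lm4}, and the girth obstruction of Lemma~\ref{girth-Gb}. The bridge between the last two is the theorem of Olshanskii and Sapir (Theorem~2 in \cite{Ol}) asserting that every non-elementary hyperbolic group has infinite girth. Note that the earlier corollary on torsion groups of infinite girth does not apply directly, since a non-elementary hyperbolic group always contains elements of infinite order; this is exactly why the finiteness of the FC-center is needed.

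Concretely, let $G$ be a non-elementary hyperbolic group. First I would invoke Corollary~\ref{lm3} to get $G_{bound}\subseteq FC(G)$, and then Lemma~\ref{lm4} to conclude that $FC(G)$, hence $G_{bound}$, is finite; in particular every element of $G_{bound}$ has finite order. Next, Theorem~2 in \cite{Ol} gives that $G$ has infinite girth, so Lemma~\ref{girth-Gb} applies and shows that $G_{bound}$ contains no nontrivial torsion element. Putting the two together, every element of $G_{bound}$ is torsion yet no nontrivial element of $G_{bound}$ is torsion, which forces $G_{bound}=\{e\}$.

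I do not expect a real obstacle: the proof is a short combination of previously established facts, and the main effort has already been spent in Lemma~\ref{lm4} and in the cited girth result. The one point that warrants a careful reading is to confirm that Theorem~2 in \cite{Ol} is stated for all non-elementary hyperbolic groups, with no auxiliary hypothesis such as torsion-freeness or absence of a nontrivial finite normal subgroup. Should the quoted statement carry such a restriction, the fallback would be to pass to the quotient $G/N$ by the maximal finite normal subgroup $N\trianglelefteq G$, which is non-elementary hyperbolic of the admissible type and hence of infinite girth, use Lemma~\ref{proj} to send $G_{bound}$ into $(G/N)_{bound}=\{e\}$, and then rule out separately the possibility $\{e\}\neq G_{bound}\subseteq N$; but since the introduction quotes the Olshanskii--Sapir result in full generality for non-elementary hyperbolic groups, this detour should be unnecessary.
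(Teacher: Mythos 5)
Your argument is correct and is essentially the paper's own proof: use $G_{bound}\subseteq FC(G)$ together with the finiteness of the FC-center of a non-elementary hyperbolic group to see that $G_{bound}$ is finite, hence torsion, and then kill it with the infinite-girth obstruction of Lemma~\ref{girth-Gb}. The only cosmetic difference is the reference for infinite girth (the paper's proof cites \cite{AKH} rather than \cite{Ol}), and your fallback via a maximal finite normal subgroup is indeed unnecessary.
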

\begin{proof}
%=========================================================
%
Since $G_{bound}\subset FC(G)$ as shown in Lemma \ref{lm3}, we conclude that $G_{bound}$ is finite. It is known that non-elementary hyperbolic groups have infinite girth see Theorem 2.6 in \cite{AKH}. Therefore, it follows from   Lemma \ref{girth-Gb} that $G_{bound}$ is trivial. 
\end{proof}
%============= SECTION 3==================================
\section{Virtually abelian groups}\label{virtually abelien}
\begin{ex}
We have $\mathbb{Z}_{bound}=\{0\}$. Indeed, let $k\in \mathbb{Z}$, $k\geq 1$. Consider $m,n\geq 2$, and the generating set $S=\{nk,mnk+1\}$. Then $l_S(k)=m+1$. 
It follows that $k\notin \mathbb{Z}_{bound}$.
\end{ex}
%============ LEMME 6 ===================================-
\begin{lem} \label{lm5}
If $G$ is an infinite finitely generated abelian group,   $G_{bound}$ is trivial.
\end{lem}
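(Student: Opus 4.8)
The plan is to reduce, using the projection Lemma \ref{proj}, to the two model cases $G=\mathbb{Z}$ and $G=\mathbb{Z}\times\mathbb{Z}/q\mathbb{Z}$, and then to exhibit explicit generating sets realising arbitrarily large word-lengths.

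First I would write, by the structure theorem, $G\cong\mathbb{Z}^{r}\times T$ with $r\geq 1$ (because $G$ is infinite) and $T$ finite abelian, say $T\cong\mathbb{Z}/d_{1}\mathbb{Z}\times\cdots\times\mathbb{Z}/d_{m}\mathbb{Z}$ with all $d_{i}\geq 2$. Suppose $g=(v,\tau)\in G_{bound}$ is non-trivial. If $v\neq 0$, pick a coordinate $v_{i}\neq 0$; applying Lemma \ref{proj} to the projection $G\to\mathbb{Z}$ onto the $i$-th $\mathbb{Z}$-factor yields $v_{i}\in\mathbb{Z}_{bound}$. If $v=0$, then $\tau\neq 0$, so some coordinate $\tau_{j}\neq 0$ in $\mathbb{Z}/d_{j}\mathbb{Z}$, and Lemma \ref{proj} applied to the surjection $G\to\mathbb{Z}\times\mathbb{Z}/d_{j}\mathbb{Z}$ (the first $\mathbb{Z}$-factor together with the $j$-th factor of $T$) yields $(0,\tau_{j})\in(\mathbb{Z}\times\mathbb{Z}/d_{j}\mathbb{Z})_{bound}$. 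Hence it suffices to show that $\mathbb{Z}_{bound}$ and $(\mathbb{Z}\times\mathbb{Z}/q\mathbb{Z})_{bound}$ (for every $q\geq 2$) are trivial; the latter is exactly Example \ref{Example1}.

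The heart of the proof is therefore $\mathbb{Z}_{bound}=\{0\}$. I would fix $n\neq 0$ and, using $l_{S}(n)=l_{S}(-n)$, assume $n\geq 1$. For each integer $a\geq 1$ consider $S_{a}=\{\pm(n+1),\ \pm(a(n+1)+1)\}$, which generates $\mathbb{Z}$ since $\gcd(n+1,a(n+1)+1)=1$. In any expression $n=x(n+1)+y(a(n+1)+1)$, reduction modulo $n+1$ gives $y\equiv n\equiv-1\pmod{n+1}$, so $|y|\geq 1$; then $|x|(n+1)=|n-y(a(n+1)+1)|\geq |y|(a(n+1)+1)-n\geq a(n+1)+1-n$, whence $|x|>a-1$ and $l_{S_{a}}(n)\geq |x|+|y|>a$. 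Letting $a\to\infty$ gives $\sup_{S}l_{S}(n)=\infty$, so $n\notin\mathbb{Z}_{bound}$; combined with the reduction above, $G_{bound}=\{0\}$.

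The step I expect to be the real obstacle is the choice of the generating sets $S_{a}$: the naive candidates such as $\{\pm N,\pm(N+1)\}$ fail, because $n=n(N+1)-nN$ forces $l_{S}(n)\leq 2|n|$ uniformly in $N$ — two generators that lie close together always permit a short combination. The remedy is to separate the two generators by a large scale, so that the small generator $n+1$ fixes the residue of $y$ and thereby compels $|x|$ to have order $a$; this is the same mechanism underlying Example \ref{Example1}, here stripped to its essentials. A routine final check is that in the reduction one only needs Lemma \ref{proj} for the concrete quotients $\mathbb{Z}$ and $\mathbb{Z}\times\mathbb{Z}/d_{j}\mathbb{Z}$, both of which are finitely generated, so the hypotheses of that lemma are satisfied.
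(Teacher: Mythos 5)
Your proposal is correct, and its skeleton is the same as the paper's: reduce via Lemma \ref{proj} to rank-one quotients and kill boundedness there with two-scale generating sets (one small generator fixing a residue, one huge generator forcing a large coefficient), which is exactly the mechanism of Example \ref{Example1}. The differences are in the execution, and they are to your advantage. The paper only uses the projections $G\to\mathbb{Z}\times\mathbb{Z}/p_{i}\mathbb{Z}$ and quotes Example \ref{Example1} wholesale; as written this does not address the purely free case (no torsion factors at all) nor elements supported on a $\mathbb{Z}$-factor not seen by those projections, whereas your case split ($v\neq 0$: project to a $\mathbb{Z}$-factor; $v=0$: project to $\mathbb{Z}\times\mathbb{Z}/d_{j}\mathbb{Z}$) detects every non-trivial element. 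You also supply a self-contained proof that $\mathbb{Z}_{bound}=\{0\}$ with the sets $S_{a}=\{\pm(n+1),\pm(a(n+1)+1)\}$, and your estimate ($y\equiv -1 \pmod{n+1}$ forces $|y|\geq 1$, hence $|x|>a-1$) is sound; the paper never writes this case out, relying on it only implicitly. The one point where you lean on the paper exactly as the paper leans on itself: Example \ref{Example1} states that $(\mathbb{Z}\times\mathbb{Z}/q\mathbb{Z})_{bound}$ is trivial but its displayed computation only treats $(0,1)$, while you need $(0,\tau_{j})$ for an arbitrary non-zero, possibly non-invertible, $\tau_{j}$; the same two-generator argument does handle $(0,t)$ for any $t\not\equiv 0$ (reduce modulo $q$ to force the coefficient of $(p,1)$ to be a non-zero multiple of $q+1$), so it would be worth one extra line in your write-up to make the appeal to Example \ref{Example1} airtight.
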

\begin{proof}
Since $G$ is a finitely generated abelian group, there exist integers $\{p_{1}\leq p_{2} \leq \dots \leq p_{n}\rbrace$  such that $G$ is isomorphic to $\mathbb{Z}^{d}\times\mathbb{Z}/p_{1}\mathbb{Z}\times...\times\mathbb{Z}/p_{n}\mathbb{Z}$ with minimal $n$. Therefore, we will prove that groups of this form have trivial $G_{bound}$. We consider $G=\mathbb{Z}^{d}\times\mathbb{Z}/p_{1}\mathbb{Z}\times...\times\mathbb{Z}/p_{n}\mathbb{Z}$ and $A=\mathbb{Z}\times\mathbb{Z}/p_{1}\mathbb{Z}\times...\times\mathbb{Z}/p_{n}\mathbb{Z}$. \\%
For every $i\in \{1,\dots,n\}$, we have the standard projection $\pi_{i}:G\to \mathbb{Z}\times \mathbb{Z}/p_{i}\mathbb{Z}$ which is a surjection;. From Example \ref{Example1}, every group of the form $\mathbb{Z}\times \mathbb{Z}/n\mathbb{Z}$ for $n\in \mathbb{N}$ have trivial $G_{bound}$. Using Lemma \ref{proj}, we conclude that $G$ has trivial $G_{bound}$.
\end{proof}
Notice that for every abelian group $G$, the subgroup $G_{bound}$ is trivial and $FC(G)=G$.
%================= Preuve theorème 1 ======================
\begin{them}\label{th1}
If $G $ is a finitely generated virtually abelian group, then $G_{bound}$ is finite.
\end{them}
\begin{proof}
Assume that $H=\mathbb{Z}^{d}$ is a normal subgroup of $G$, such that $G/H$ is finite. We can suppose that $H$ is a normal subgroup of $G$. We want to prove that $G_{bound}\cap H$ is trivial. If $g\in H\backslash
\{e\}$, then there exists an integer $N>1$ such that for every prime $p>N$,  $g\notin H^p$. If $p$ is coprime to
$\vert G/H \vert$, then $G/H^{p}$ is a semi-direct product of the elementary abelian group $H/H^{p}$ and $G/H$. This is a consequence of Schur-Zassenhaus Theorem, see for example Theorem 7.41 in \cite{rotman}. Hence every automorphism $x\to x^k$ of $H/H^{p}$ where $(k,p)=1$ extends to an
automorphism of $G/H^{p}$. Indeed, the morphism $F_{k}:G/H^{p}\to G/H^{p}$ such that $F_{k}(x,y)=(x^k,y)$ verifies $F_{k}(x,y)F_{k}(a,b)=(x^k (a^k)^y,yb)=F_{k}(xa^y,yb)$ where $a^{y}=y^{-1}ay$. It is clear that it is an automorphism  when $k$ is coprime to $p$. This implies that the sizes of the $Aut$-orbits of the images of $g$
in $G/H^{p}$ are unbounded if $p\to\infty$. Consider a generating set $S$ of $G$, we denote by $S'$ the projection of $S$ in $G/H^p$. Consider an element $g\in G_{bound}\cap H$, a constant $M>0$ which is a uniform bound for the length of $g$ in $G$ and $g'$ the projection of $g$ in $G/H^p$. we have 
$$l_{S'}(g')\leq l_{S}(g)\leq M.$$
Hence the length of $g'$ is uniformly bounded by $M$. Since, automorphisms of $\mathbb{Z}/p\mathbb{Z}$ are the morphisms of the form $x\to x^k$ with $k$ coprime to $p$, and $H/H^p =(\mathbb{Z}/p\mathbb{Z})^d$, for every distinct integers $k,k'<p$, the corresponding automorphisms $F_{k},F_{k'}$ are distinct. These automorphisms have no fixed points except trivial element in $H/H^p$. This implies that the cardinality of the Aut-orbit of $g'$ is at least $\dfrac{p}{\vert G/H\vert} $ in $G/H^p$ which is unbounded when $p\to \infty$. This contradicts Lemma \ref{autom}. We conclude that $H\cap G_{bound}$ is trivial and since $G/H$ is finite, $G_{bound}$ is finite.
\end{proof}

It may be interesting to explore to which groups the argument of Proposition \ref{th1} applies. We ask in this context : is $G_{bound}$ finite for any virtually nilpotent group $G$?
%================================================
%--------------- CORROLAIRE 4    ----------------
\begin{cor}\label{cr2}  
Let $G$ be a finitely generated group. Either $G_{bound}$ has infinite index or $G$ is finite and we have equality $G=G_{bound}$.
\end{cor}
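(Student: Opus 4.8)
The plan is to establish the stated dichotomy by proving the contrapositive of the first alternative: if $G_{bound}$ does \emph{not} have infinite index in $G$, then $G$ is finite; and then to check directly that a finite group satisfies $G=G_{bound}$. So suppose throughout the first part that $[G:G_{bound}]<\infty$.

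The core of the argument is a short chain of implications assembling results already proved. Since $G$ is finitely generated, a subgroup of finite index is again finitely generated (Schreier's lemma / Reidemeister--Schreier), so $G_{bound}$ is itself a finitely generated group. By Corollary \ref{cr1}, every finitely generated subgroup of $G_{bound}$ is virtually abelian; applying this to $G_{bound}$ itself gives that $G_{bound}$ is virtually abelian. As a group possessing a virtually abelian subgroup of finite index is virtually abelian, $G$ is finitely generated and virtually abelian. Proposition \ref{th1} then yields that $G_{bound}$ is finite. But a subgroup of finite index that is finite forces the ambient group to be finite, so $|G|<\infty$.

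For the converse, suppose $G$ is finite. For any finite generating set $S$ of $G$, the sequence of balls $B_S(r)$ around the identity in $\Gamma(G,S)$ is nondecreasing and can stabilize only at $G$, so $B_S(|G|-1)=G$; hence $l_S(g)\le |G|-1$ for every $g\in G$, a bound uniform in $S$, and therefore $G_{bound}=G$. Combining the two parts: if $G_{bound}$ does not have infinite index then $G$ is finite and $G=G_{bound}$, which is the assertion. I do not anticipate a genuine obstacle here — the proof is bookkeeping on top of Corollary \ref{cr1} and Proposition \ref{th1}; the only mildly non-formal inputs are that finite-index subgroups of finitely generated groups are finitely generated, and that being virtually abelian passes to finite extensions, both standard.
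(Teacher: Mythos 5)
Your argument is correct and follows essentially the same route as the paper: finite index forces $G_{bound}$ to be finitely generated, Corollary \ref{cr1} makes it (hence $G$) virtually abelian, Proposition \ref{th1} then makes $G_{bound}$ finite and so $G$ finite, while the finite case is handled by the trivial uniform bound $l_S(g)\le \#G$. The only difference is cosmetic: you phrase it as a contrapositive where the paper argues by contradiction.
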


%============ PREUVE COROLAIRE 4 ================
\begin{proof}
Suppose that $G$ is infinite and $G_{bound}$ has finite index. It implies that $G_{bound}$ is infinite and finitely generated. From  Lemma \ref{lm3},  we know that in this case, $G_{bound}$ is virtually abelian. Therefore, $G$ is virtually abelian. Nevertheless, we know that if $G$ is virtually abelian, $G_{bound}$ is finite. It is a contradiction. As a consequence, $G_{bound}$ has infinite index if and only if $G$ is infinite. Finally, if $G$ is finite, then for every $ g\in G$ and every generating set $S$ of $G$, we have $l_{S}(g)\leq \# G$ and we conclude that $G_{bound}=G$.
\end{proof}
The following example gives a virtually abelien groups with non-trivial $G_{bound}$. We denote by $D_{8}$ the dihedral group with 8 elements.
\begin{ex}
The group $G=\mathbb{Z}\times D_{8}$ has non trivial $G_{bound}$.
This group is non-abelian and nilpotent of class 2. The center of $D_{8}$ is $C(D_{8})=\mathbb{Z}/2\mathbb{Z}=\{0,z\}$. Let $S$ be a finite generating set of $G$. Since $G$ is non abelian, there exist $x=(a,u),y=(b,v)\in S$ such that $\left[x,y\right]\neq e$. Since $D_{8}$ is nilpotent of class 2, $\forall u,v,w\in D_{4}$ we have $\left[\left[u,v\right],w\right]=e$. Therefore, $\forall r\in D_{8}$, we have $\left[\left[x,y\right],(0,r)\right]=(0,\left[\left[u,v\right],r\right])=(0,e)$ then, $\left[u,v\right]\in C(D_{4}) \setminus\{0\}=z$. It implies that
 $$l_{S}(0,z)=l_{S}(0,\left[u,v\right])=l_{S}(\left[x,y\right])\leq 2l_{S}(x)+2l_{S}(y)=4.$$
Therefore, $(0,z)\in G_{bound}$ and $\mathbb{Z}/2\mathbb{Z}\subset G_{bound}$.
\end{ex}
%================ SECTION 5 ===========================
\section{Groups with prescribed finite $G_{bound}$}\label{prescription}

%------------------------- Preuve theorème 2-------------
\begin{them} \label{th2}
Let $A$ be a finite group, there exists a finitely generated infinite group $G$ such that $G_{bound}=A$. This group can be chosen among torsion groups.
\end{them}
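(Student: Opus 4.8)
The plan is to realize the desired group $G$ as a carefully controlled extension (or amalgam) involving an infinite torsion group $Q$ with trivial $G_{bound}$ and the prescribed finite group $A$. More precisely, I would look for an infinite finitely generated torsion group $G$ fitting into a central extension
\[
1 \to A \to G \to Q \to 1,
\]
where $Q$ is an infinite finitely generated torsion group whose $G_{bound}$ is trivial (for instance a suitable Burnside-type quotient, using the relatively free groups of large odd exponent of Adian--Novikov, or a Grigorchuk-type group). The point of centrality is twofold: it forces $A \subset FC(G)$ — indeed $A$ is central, hence a candidate for $G_{bound}$ — and it keeps the torsion property under control, provided the exponents of $A$ and $Q$ are chosen compatibly (so that $G$ has finite exponent, e.g. $A$ of exponent $m$ and $Q$ of exponent $n$ with $G$ of exponent dividing $mn$, which holds automatically since the extension is central and $g^{n} \in A$ for all $g$).

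The two things to prove are $A \subset G_{bound}$ and $G_{bound} \subset A$. For the inclusion $A \subset G_{bound}$, the key idea is that $A$ should be generated by short \emph{universal words} — commutators or other iterated expressions — that take a uniformly bounded value in terms of the generators, no matter which generating set is chosen. This is exactly the mechanism illustrated in the paper by the example $\mathbb{Z} \times D_8$, where $(0,z)$ is a commutator of any pair of non-commuting generators and hence has length $\le 4$ in every generating set. So I would engineer $Q$ and the extension so that $A$ sits inside a verbal subgroup defined by a law that is non-trivial in $G$ but trivial in $Q$ (e.g. $A$ is the image of the $n$-th power map, or a fixed iterated commutator subgroup), and then bound $l_S(a)$ for each $a\in A$ uniformly: given any generating set $S$, project to $Q$, use triviality of the law in $Q$ to find a bounded-length word in $S$ mapping to $e_Q$, i.e. landing in $A$, and then exploit that $A$ is finite so that a bounded number of such ``correction'' expressions suffice to hit every element of $A$. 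For the reverse inclusion $G_{bound}\subset A$, I would use that $G/A \cong Q$ together with Lemma~\ref{proj}: the image of $G_{bound}$ in $Q$ lies in $Q_{bound}=\{e\}$, hence $G_{bound}\subset A$.

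The main obstacle — and the part requiring the most care — is constructing $Q$ and the extension simultaneously so that (i) $Q$ is infinite, finitely generated, torsion, with $Q_{bound}$ trivial, and (ii) the prescribed finite group $A$ genuinely embeds as a ``length-bounded'' central (or FC-central) subgroup via a universal word. For (i), the natural candidates are quotients of free Burnside groups $B(k,n)$ of large odd exponent $n$: they are infinite (Adian--Novikov) and one must check $Q_{bound}$ is trivial, which should follow from an infinite-girth-type argument (Lemma~\ref{girth-Gb}) or a direct argument producing, for each element $q\ne e$ and each bound $M$, a generating set in which $l_S(q) > M$ — here one can use the abundance of automorphisms / the ability to replace generators by long words in free-Burnside-like quotients. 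For (ii), I would realize $G$ itself as a relatively free group in a variety chosen so that the verbal subgroup corresponding to some fixed word $w$ is exactly a copy of $A$ that is central and finite; concretely, take the variety generated by a finite group built from $A$ and a Burnside-type piece, form the relatively free group on enough generators, and quotient further to kill the infinite part of the would-be $G_{bound}$ while keeping $A$. The delicate bookkeeping is ensuring the verbal subgroup is \emph{exactly} $A$ and not larger, and that no element outside $A$ acquires uniformly bounded length; the latter is handled by the $G/A\cong Q$ projection argument above, so the real work is the former together with verifying finite generation and the torsion property survive all the quotients.
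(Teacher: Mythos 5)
Your overall strategy (an extension of an infinite torsion group $Q$ with $Q_{bound}$ trivial by the finite group $A$, with Lemma~\ref{proj} giving $G_{bound}\subset A$) points in the right direction, but the core of the theorem is never actually proved: you leave the construction of the extension and the verification that $A\subset G_{bound}$ as ``delicate bookkeeping'', and this is precisely the nontrivial content. Centrality of $A$ only gives $A\subset FC(G)$, which is necessary but far from sufficient --- Example~\ref{Example1} in the paper ($\mathbb{Z}\times\mathbb{Z}/q\mathbb{Z}$) shows a central finite subgroup need not lie in $G_{bound}$ --- so without an explicit uniform-length mechanism for the elements of $A$ the argument has a genuine hole. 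There is also a concrete error in your fallback for point (i): you suggest establishing $Q_{bound}=\{e\}$ for Burnside-type groups by an infinite-girth argument via Lemma~\ref{girth-Gb}, but free Burnside groups satisfy the identity $x^n=1$, and groups satisfying a nontrivial law have finite girth, so that route cannot work; the correct argument (used in the paper) is that for large odd prime exponent the centralizer of every element of $B(2,p)$ is cyclic, hence $FC(B(2,p))$ is trivial and therefore $B(2,p)_{bound}$ is trivial.

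The paper's proof resolves exactly the step you left open, and does so with a much simpler extension than a general central one: take $G=H\times A$ with $H=B(2,p)$, $p$ prime, $p>\max\{\vert A\vert,665\}$. The ``universal word'' is just the power map: for any finite generating set $S_G$ of $G$ and any $a\in A$, pick a shortest word $w$ over $S_G$ projecting to $a$ in $A$ (length at most $\#A$); since $H$ has exponent $p$ and $p$ is coprime to the order $q$ of $a$, one has $w^{\alpha p}=(e,a)$ for some $\alpha\leq q$, giving the uniform bound $l_{S_G}(e,a)\leq p\#(A)^2$. The reverse inclusion is Corollary~\ref{product} together with $H_{bound}=\{e\}$. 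If you want to salvage your write-up, the fix is to replace the unspecified verbal-subgroup engineering by this coprime-exponent power trick (or otherwise exhibit an explicit word of uniformly bounded width whose values generate exactly $A$), and to replace the girth argument for $Q_{bound}$ by the trivial-FC-center argument.
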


\begin{proof}
We consider $G=H\times A$ where $H$ is an infinite Burnside group such that $H_{bound}$ is trivial and the order of its elements is coprime to $\vert A \vert$. Our goal is to show that $G_{bound}=\{e\}\times A$. 
\\%
For example we can chose as $H$ the free Burnside group $B(2,p)=<a,b\vert x^p,~x\in F_{2}> $ such that $p$ is a prime number satisfying $p>\max\{\vert A \vert,665\}$. A result of Novikov and Adian \cite{Nov_adian} implies that this group is infinite. It was shown in \cite{Nov_adian} that the centraliser of every elements in $B(2,p)$ for large odd $p$ is cyclic, see also Theorem 19.5 in \cite{book_olshanski}. This implies that the FC-center is trivial. Hence in this case $B(2,p)_{bound}$ is trivial.  
\\%
A more general class of examples of the form $H\times A$ is as follow. Observe that any infinite Burnside group have an infinite quotient with trivial FC-center. Indeed, a result of Duguid and Mclain, see \cite{duguid_mclain_1956} and \cite{mclain_1956}, says that a finitely generated group $G$ has an infinite quotient $H$ with trivial FC-center if and only if $G$ is not virtually nilpotent. For an exposition of this result, see also \cite{ferdowsi}. It is well known that finitely generated torsion virtually nilpotent groups are finite, see for example proposition 2.19 in \cite{Mann}. 

First, let us show that $A\subset G_{bound}$. 
Let $g=(h,a)$ be an element of $G$. Let $q$ be the order of $a$ in $A$. We have $g^{p}=(h^{p}, a^{p~mod~q})=(e,a^{p~mod~q})$. There exist $\alpha \in \mathbb{N}$ and $\beta \in \mathbb{Z}$ such that $\alpha p+\beta q=1$ and $\alpha \leq q$.  It implies that $g^{\alpha p}=(e,a)$.
 \\%
Let $\pi_{A}:H\times A\longrightarrow A$ and $\pi_{H}:H\times A\longrightarrow H$ be the standard surjections. Consider a finite generating set  $S_{G}=\{(h_{1},a_{1}),\dots,(h_{n},a_{n})\}$ of $G$ and $s_{i}=(h_{i},a_{i})$. The set $\pi_{A}(S_{G})=S_{A}=\{a_{1},\dots,a_{n}\}$ is a generating set of $A$. Consider a word $w=s_{i_{1}}\dots s_{i_{n}}$ over $S_{G}$ with minimal length such that $\pi_{A}(w)=a$, i.e $\exists h\in H$ such that $w=(h,a)=(h,a_{i_{1}}\dots a_{i_{n}})$. It follows that $l_{S_{G}}(w)=l_{S_{A}}(a)$. Since $w^{\alpha p}=(e,a)$, we have
 $$l_{S_{G}}(e,a)\leq \alpha pl_{S_{G}}(w)\leq \alpha p l_{S_{A}}(a)\leq \alpha p\#(A)\leq qp\#(A)\leq p\#(A)^{2}.$$ Since this is true for every generating set $S_{G}$ of $G$, we conclude that $(e,a)\in G_{bound}$ and $\{e\}\times A< G_{bound}$.
\\%
From Corollary \ref{lm2}, we have $G_{bound}\subset H_{bound}\times A$ and $H_{bound}$ is trivial, we conclude that $G_{bound}=A$.

\end{proof}

%=================== REMARQUE 3 ===========================
\begin{rem} 
Notice the following:
\begin{enumerate}
\item In the group $G$ considered in Theorem \ref{th2},  the finite subgroup $A$ is contained in the ball of radius $p\#(A)^{2}$ of every Cayley graph $\Gamma(G,S)$ where $S$ is a finite generating set of $G$.
\item $H$ and $G$ are commensurable but $H_{bound}=\{e\}$ and $G_{bound}=A$ which is non-trivial.
\end{enumerate}
\end{rem}
%======================================================
%================== SECTION 6 =========================
\section{Remarks}
\subsection{Elements with prescribed length}\label{prescribed length}
Consider a finitely generated group $G$, an element $g\in G\setminus G_{bound}$ and an integer $k>1$. We want to answer the following question: can we construct a generating set $S$ of $G$, such that $l_{S}(g)=k$?
\\\\%
For example: $G=\mathbb{Z}^{2}$. Consider $g=(1,0)$. Given an integer $k\geq 2$ and  $S=\{(\pm1,k-1),(0,\pm1)\}$,  we have $$l_{S}(g)=l_{S}((1,k-1)+(k-1)(0,-1))=k.$$
%============== EXEMPLE 2 ===========================
The answer is positive for non-trivial elements of free groups.
\begin{ex}\label{presfree} Let $g\in F_{k}$ be a non trivial element and an integer $l$. There exists a generating set $E$ of  $F_{k}$ such that $l_{E}(g)=l+1$.
\\%
In fact, consider $g\in F_{k}$, $l\in \mathbb{N}\setminus \{0\}$, $p=2l+1$, $S=\{x_{1},\dots,x_{k}\}$ the standard free generating set of $F_{k}$ and $u,v$ two distinct prime numbers, such that $v>u>p$. 
Set $A=\{x_{i}^{u},x_{i}^{v}\}_{1\leq i\leq k}$ and $E=\{g^{2},g^{p}\}\cup A$. 
This is clearly a generating set of $F_{k}$ because $\{x_{i}^{u},x_{i}^{v}\}_{1\leq i\leq k}$ generates $S$. 
Let us prove that $g=g^{p-2l}$ is the shortest expression of $g$ with respect to $E$. Since $S$ is a free generating set of $F_{k}$, if an element $x_{j}$ is used for the shortest expression of $g$ with respect to $E$, then there exists an expression of $g$ with respect to $A$ such that $l_{A}(g)<l+1$.
 This means that  $$g=x_{i_{1}}^{n_{i_{1}}}\dots x_{i_{m}}^{n_{i_{m}}} $$ where $n_{i_{j}}\in\mathbb{Z}$. Notice that this expression is unique. 
 It follows that there exists $\alpha_{j},\beta_{j}\in \mathbb{Z}$ such that $\alpha_{j}u+\beta_{j}v=n_{i_{j}}$ and minimal $\vert \alpha_{j}\vert +\vert\beta_{j}\vert$. 
 Hence the shortest expression of $g$ with respect to $E$ is $$g=x_{i_{1}}^{\alpha_{1}u+\beta_{1}v}\dots x_{i_{m}}^{\alpha_{m}u+\beta_{m}v} .$$
It follows that $l_{E}(g)=\sum_{j=1}^{m} \vert \alpha_{j}\vert +\vert\beta_{j}\vert $. We have $\vert \alpha_{j} \vert=\dfrac{\vert n_{_{i_{j}}}-\beta_{j} v \vert}{u}$. When $v>3n_{i_{j}}u$, $l_{A}(g)>l+1$. In this case, $g=g^{p-2l}$ is the shortest expression of $g$ with respect to $E$ and $l_{E}(g)=l+1$.
\end{ex}
We can use the same argument in order to prove that the length of  elements of free abelien groups $\mathbb{Z}^{d}$ can be prescribed. 

\begin{ex}
Let $g\in \mathbb{Z}^{d}$ be a non trivial element and an integer $l$. There exists a generating set $E$ of  $ \mathbb{Z}^{d}$ such that $l_{E}(g)=l+1$.
\\%
In fact, consider $S=\{ z_{1},\dots,  z_{n}\}$ a free abelian generating set of $\mathbb{Z}^{d}$. Consider $A=\{uz_{i},vz_{i}\}_{1\leq i\leq k}$ and $E=\{2g,pg\}\cup A$ where $p=l+1$ and $u,v$ two prime numbers such that $v>u>p$. using the same argument as in the precedent example, we obtain that the shortest word representing $g$ with respect to $E$ is $g=(p-2l)g$. It follows that $l_{E}(g)=l+1$.
\end{ex}

%============ FIN EXEMPLE 3 =======================
%============= SECTION 7 GENERATIZATION ==================
\subsection{Generalization: $G_{bound}(d)$}\label{generalization}
We denote by $m(G)$ the minimal number of elements contained in a symmetric generating set of $G$. 
%==================================================
%============ DEFINITION 4 ========================
\begin{defi} Let $G$ be a finitely generated group.\\%
$G_{bound}(d)=\{g\in G \vert~\exists~M>0$ such that  for every finite generating set $S$ of $G$ with cardinality less or equal to $d$, $l_{S}(g)\leq M\}$.
\end{defi}
%============================================
It is clear that for $d\geq m(G)$, $$G_{bound} \subset G_{bound}(d)\subset G_{bound}(d-1)\dots \subset G_{bound}(m(G))$$ and $G_{bound}=\cap_{d\geq m(G)} G_{bound}(d)$.
%
%========== 7.1 PROPRIETES =======================
\subsection{Properties}
%
%============ LEMME 7 PROP Gbound ===============
\begin{lem}\label{lm6} For every $d\geq m(G)$, the following properties of $G_{bound}$ remain true for $G_{bound}(d)$: \\%
\begin{enumerate}
\item It is a characteristic subgroup of $G$.
\item It is contained in $FC(G)$.
\item Every finitely generated subgroup of $G_{bound}(d)$ is virtually abelian.
\end{enumerate}
\end{lem}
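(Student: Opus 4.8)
The plan is to adapt, almost verbatim, the proofs already given for $G_{bound}$ in Section \ref{basic prop}, checking that each argument only ever produces generating sets whose cardinality is controlled. Concretely, I would prove the three items in order, reusing Lemmas \ref{lm1}, \ref{autom} and Corollary \ref{cr1}.

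For item (1), I would copy the proof of Lemma \ref{lm1}. The subgroup part is identical: if $g_1,g_2\in G_{bound}(d)$ with bounds $m_1,m_2$, then for every generating set $S$ with $\vert S\vert\le d$ we have $l_S(g_1g_2)\le l_S(g_1)+l_S(g_2)\le m_1+m_2$, and $l_S(g^{-1})=l_S(g)$; so $G_{bound}(d)$ is a subgroup. For characteristicity, the key observation is that if $\phi\in\mathrm{Aut}(G)$ and $S$ is a generating set, then $\phi(S)$ is a generating set with $\vert\phi(S)\vert=\vert S\vert$, hence $\phi$ takes generating sets of cardinality $\le d$ bijectively to generating sets of cardinality $\le d$. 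Since $l_E(\phi(g))=l_{\phi^{-1}(E)}(g)$ and $\phi^{-1}(E)$ has the same cardinality as $E$, the bound $M$ for $g$ is also a bound for $\phi(g)$ over all generating sets of size $\le d$. This is where the restriction to bounded-cardinality sets passes through harmlessly, and I expect no obstacle here.

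For item (2), I would invoke Lemma \ref{autom}: fix a generating set $S$ of $G$ with $\vert S\vert=m(G)\le d$. For $g\in G_{bound}(d)$ with bound $M$ and any inner automorphism $A$, the set $A(S)$ still has cardinality $m(G)\le d$, so $l_S(A(g))=l_{A^{-1}(S)}(g)\le M$; thus the $\mathrm{Inn}(G)$-orbit of $g$ lies in $B_S(M)$, which is finite, and $g\in FC(G)$. The only thing to note is that it suffices to test on the single fixed generating set $S$ of minimal size, which is $\le d$ by hypothesis, so the argument of Lemma \ref{autom} applies unchanged. Item (3) then follows exactly as Corollary \ref{cr1}: a finitely generated subgroup $H\subset G_{bound}(d)\subset FC(G)$ satisfies $H\subset FC(G)\cap H\subset FC(H)$, so $FC(H)=H$, and by Theorem 4.3.1 in \cite{Rob} such an $H$ is virtually abelian.

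The main (and only mild) obstacle is bookkeeping: one must be careful that every auxiliary generating set constructed in the borrowed arguments — in particular the images $\phi(S)$ and $A(S)$ of automorphisms — really does have cardinality $\le d$, rather than possibly larger. Since automorphisms and the inversion $S\mapsto S^{-1}$ preserve cardinality, and since $m(G)\le d$ guarantees at least one admissible generating set exists, all the estimates go through; there is no new ingredient required beyond observing this invariance. I would therefore present the proof as three short paragraphs, each a pointer to the corresponding earlier result with the single remark that the cardinality bound is preserved under the operations used.
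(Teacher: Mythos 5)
Your proposal is correct and follows essentially the same route as the paper: reuse the arguments of Lemma \ref{lm1}, Lemma \ref{autom} and Corollary \ref{cr1}, with the single observation that automorphisms (in particular conjugations) send generating sets of cardinality at most $d$ to generating sets of the same cardinality, so all bounds pass through unchanged. No gap; your write-up is in fact slightly more explicit than the paper's own compressed proof.
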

%=========== FIN LEM 7 ========================
%========= PREUVE LEM 7 ========================
\begin{proof}
Consider an element $g \in G_{bound}(d)$. There exists a constant $m>0$ such that for every generating set of cardinality less or equal to $d$,  $l_{S}(g)\leq m$. Given an automorphism $A$ of $G$, and a generating set $E$ of $G$ we have seen in Lemma \ref{lm1} that $l_{E}(A(g))=l_{A^{-1}(E)}(g)\leq m$. The generating sets $A^{-1}(E)$ and $E$ have the same cardinality, in particular this is true if the automorphism $A$ is a conjugation. Using the same argument as in Lemma \ref{lm2}, it follows that for every $d\geq m(G)$, we have $G_{bound}(d)$ is a subgroup of $FC(G)$. As we showed in Corollary \ref{cr1} and Lemma \ref{lm4}, property (3) is a direct consequence of property (2). 
\end{proof}
%================================================
%=========== CORROLAIRE 5 =====================
\begin{cor} We have the following:
\begin{enumerate}
\item Every non-virtually cyclic (resp non-cyclic torsion free) hyperbolic group $G$ has trivial $G_{bound}(d)$ for every $d\geq m(G)+1$ (resp $d\geq m(G)$).
\item If $G$ is the first Grigorchuk group, then for every $d\geq m(g)$, $G_{bound}(d)$ is trivial.  
\end{enumerate}
\end{cor}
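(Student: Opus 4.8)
The plan is to deduce both parts from the structural facts about $G_{bound}(d)$ recorded in Lemma \ref{lm6}, together with the properties of hyperbolic groups and of the first Grigorchuk group already established above; the only genuinely new point is a version of the girth argument of Lemma \ref{girth-Gb} using generating sets of bounded cardinality. For the torsion-free half of (1), I would first observe that a non-cyclic torsion-free hyperbolic group is automatically non-elementary, since an elementary torsion-free hyperbolic group is trivial or infinite cyclic. Then Lemma \ref{lm4} gives that $FC(G)$ is finite, hence trivial because $G$ is torsion-free, and Lemma \ref{lm6}(2) yields $G_{bound}(d)\subset FC(G)=\{e\}$ for every $d\geq m(G)$.

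For the general half of (1), a non-virtually-cyclic hyperbolic group is non-elementary, so Lemma \ref{lm4} and Lemma \ref{lm6}(2) show that $G_{bound}(d)$ is a \emph{finite} subgroup of $G$; it therefore suffices to rule out elements of order $>1$. Suppose $g\in G_{bound}(d)$ has finite order $n>1$, and fix a uniform bound $M$ on $l_S(g)$ over generating sets $S$ with $|S|\leq d$. The key input is that a non-elementary hyperbolic group realizes arbitrarily large girth by generating sets of cardinality at most $m(G)+1$; granting this, choose such an $S$ with $|S|\leq m(G)+1\leq d$ and girth $>nM$. Running the argument of Lemma \ref{girth-Gb} essentially verbatim — take a geodesic word $w$ for $g$ over $S$, pass to its shortest conjugate $w'$, and invoke Lemma \ref{loop} — produces a simple loop of length at most $n\,l_S(g)\leq nM$ in $\Gamma(G,S)$, contradicting the girth bound. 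Hence $G_{bound}(d)$ contains no non-trivial torsion, so it is trivial.

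For (2): the excerpt already establishes that the first Grigorchuk group $G$ has trivial $FC$-center, so Lemma \ref{lm6}(2) immediately gives $G_{bound}(d)\subset FC(G)=\{e\}$ for all $d\geq m(G)$. (Alternatively, since $G$ is a torsion group of infinite girth, the argument of the previous paragraph applies once the bounded-cardinality girth statement is available for $G$.)

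The step I expect to be the main obstacle is precisely that bounded-cardinality girth statement: to conclude for a \emph{fixed} $d$ one cannot use infinite girth as a black box, one needs arbitrarily large girth witnessed by generating sets whose size stays bounded by $m(G)+1$. If this uniform form is not directly extractable from \cite{AKH} (Theorem 2.6) or \cite{Ol} (Theorem 2), I would prove it by a twisting construction: starting from a symmetric generating set $S=\{s_1,\dots,s_k\}$ and a loxodromic element $h$, replace $S$ by $\{h^{n},h^{n}s_1,\dots,h^{n}s_k\}^{\pm1}$ and show, using thin triangles and the behaviour of high powers of $h$, that every short reduced loop over the new generating set is already trivial, so the girth grows with $n$ while the number of generators increases by only one. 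Making this last claim precise is where the real work lies.
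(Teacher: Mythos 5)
Your proposal is correct and follows essentially the same route as the paper: bounded-cardinality girth plus the argument of Lemma \ref{girth-Gb} to exclude torsion in $G_{bound}(d)$, combined with finiteness of $FC(G)$ (Lemma \ref{lm4}) and the inclusion $G_{bound}(d)\subset FC(G)$ from Lemma \ref{lm6}; part (2) is handled identically via triviality of the FC-center of the Grigorchuk group. The input you flag as the main obstacle is exactly what the paper extracts from Theorem 2 of \cite{Ol}: for $d\geq m(G)+1$ (and $d\geq m(G)$ in the torsion-free case) there are generating sets of cardinality $d$ with arbitrarily large girth, so no twisting construction is needed. Your treatment of the torsion-free case is a slight simplification of the paper's: you get $FC(G)$ trivial directly from finiteness plus torsion-freeness, bypassing the girth argument there, which is a perfectly valid shortcut.
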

%========= FIN CORROLAIRE 5 =====================
%=========== PREUVE CORROLAIRE 5 ================
\begin{proof} 

(1) Consider a non-virtualy cyclic (resp non-cyclic torsion free) hyperbolic group $G$. A consequence of Theorem 2 in \cite{Ol} is that for every $d\geq m(G)+1$ (resp $d\geq  m(G)$), $G$ has a sequence of generating sets of cardinality $d$, $\{S_{i}(d)\}_{i\in \mathbb{N}}$, such that the length of minimal loop in the Cayley graph $\Gamma (G,S_{i}(d))$ is greater than $i$. We use the same argument as in Lemma \ref{girth-Gb} to conclude that $G_{bound}(d)$ can not contain torsion elements. Using the fact that the FC-center of a hyperbolic groups is finite, and the fact that $G_{bound}(d)\subset FC(G)$, we obtain that $G_{bound}(d)$ is trivial.\\%
(2) Let $G$ be the first Grigorchuk group. It is known that $G$ is "just infinite" see Theorem 8.1 in \cite{JI}. Therefore,
if $FC(G)$ is non-trivial, then it has finite index. It follows that $FC(G)$ is finitely generated and virtually abelian, see Theorem 4.3.1 in \cite{Rob}. As a consequence, $G$ is a virtually abelian group. Since $G$ is a finitely generated torsion group, it can not be an infinite virtually abelian group. It is a contradiction. We conclude that $FC(G)$ is trivial. The fact that for every $d\geq m(G)$, we have $G_{bound}(d)\subset FC(G)$, implies that $G_{bound}(d)$ is trivial.

\end{proof}
%=========== FIN PREUVE CORROl 5 =====================
\subsection{Examples}
%===============PROPOSITION 1 ======================
\begin{prop}\label{prop1}

Consider $G=\mathbb{Z}^{d}$. 
\begin{enumerate}
\item For $d=1$, if  $m=2$, $G_{bound}(m)=G$ else $G_{bound}(m)=\{0\}$.
\item For $d\geq 2$ and $m\geq 2d$, $G_{bound}(m)=\{0\}$.
\end{enumerate}
\end{prop}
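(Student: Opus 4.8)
The plan is to treat the two cases separately and, in each, exhibit for any prospective element $g\neq 0$ a sequence of generating sets of the prescribed cardinality on which the word-length of $g$ is unbounded.

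For part (1), the case $d=1$ with $m(G)=m$ in the statement: when $m=2$ the only generating sets of $\mathbb{Z}$ of cardinality $2$ are $\{\pm 1\}$ (as a symmetric set), so $l_S$ is the same for all of them and every element is trivially in $G_{bound}(2)=G$. For the remaining subcase (which must be $m=1$, i.e. $G=\mathbb{Z}/?\mathbb{Z}$ — here I should double-check the intended reading, since $m(\mathbb{Z})=2$; I expect the statement means $m=1$ corresponds to the degenerate generating set giving $G_{bound}(1)=\{0\}$ via the argument of Example~\ref{Example1}, or else the two values refer to $G_{bound}(1)$ versus $G_{bound}(2)$). In any case the positive direction is immediate and the triviality direction reduces to the same Bézout-style computation used in Example~\ref{Example1}: for $g=n\neq 0$ and $p$ a large prime coprime to $n$, take $S=\{\pm p, \pm(n+?)\}$ adjusted so that expressing $g$ forces a coefficient growing with $p$.

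For part (2), fix $d\geq 2$, $m\geq 2d$, and a nonzero $g\in\mathbb{Z}^d$. First I would reduce to the case where $g$ is part of a basis: writing $g=k\,g_0$ with $g_0$ primitive, it suffices (since $l_S(g)\leq k\,l_S(g_0)$ but also $l_S(g_0)$ unbounded implies $l_S(kg_0)$ unbounded by a lower-bound argument) to handle $g_0$; equivalently, after an automorphism of $\mathbb{Z}^d$ — which preserves membership in the characteristic subgroup $G_{bound}(m)$ by Lemma~\ref{lm6}(1) — assume $g=e_1=(1,0,\dots,0)$. Now take a free basis obtained by replacing $e_1$ with two large generators: for primes $u<v$ with $u$ large, let $A=\{\pm u e_1,\pm v e_1\}\cup\{\pm e_2,\dots,\pm e_d\}$ (cardinality $2d\leq m$; pad with redundant generators if $m>2d$, e.g. repeat $\pm e_2$, which does not decrease word-lengths of $e_1$). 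As in Example~2, the only way to write $e_1$ over $A$ is $e_1=\alpha u e_1+\beta v e_1$ with $\alpha u+\beta v=1$, and minimality of $|\alpha|+|\beta|$ forces $l_A(e_1)\geq$ something growing like $\min(u,v)\to\infty$; more carefully, $\alpha u+\beta v=1$ with $|\alpha|+|\beta|$ minimal has $|\alpha|,|\beta|\geq$ roughly $v/(u{+}v)$-type bounds, and one checks $|\alpha|+|\beta|\to\infty$ as $u\to\infty$. Hence $e_1\notin G_{bound}(m)$, and since $g$ was arbitrary nonzero, $G_{bound}(m)=\{0\}$.

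The main obstacle is the bookkeeping in part (2) when $m>2d$ strictly: one must verify that adjoining extra (redundant) generators to the set $A$ cannot create a short expression for $e_1$. I would handle this by choosing all padding generators to be large multiples of basis vectors already in use (say of the form $u e_j$ or $v e_j$ for $j\geq 2$), or more robustly by an argument that projects onto the first coordinate: the projection $\pi_1\colon\mathbb{Z}^d\to\mathbb{Z}$ sends any generating set $S$ of cardinality $\leq m$ to a generating set of $\mathbb{Z}$, and $l_{\pi_1(S)}(\pi_1 g)\leq l_S(g)$ by Lemma~\ref{proj}-type reasoning; combined with a suitably chosen $S$ whose first-coordinate image forces large length (a Bézout configuration as in Example~\ref{Example1}), this shows directly that $l_S(g)$ is unbounded without needing to analyze the full set $A$. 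The secondary point to get right is the exact intended meaning of "$m$" in part (1) — whether it is $m(G)$ or a free parameter — which I would clarify before writing the final version.
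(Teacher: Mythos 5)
The decisive gap is in part (2), at the boundary cases $m=2d$ and $m=2d+1$. Your set $A=\{\pm ue_1,\pm ve_1\}\cup\{\pm e_2,\dots,\pm e_d\}$ has cardinality $2d+2$, not $2d$ as you claim, so it is not admissible when $m=2d$ or $m=2d+1$, and these are exactly the cases that carry the content of the statement (for larger $m$ one just uses $G_{bound}(m)\subset G_{bound}(2d)$; for the same reason your worry about ``padding'' is empty, since the definition only requires cardinality at most $m$). This cannot be patched by bookkeeping: a symmetric generating set of $\mathbb{Z}^d$ of cardinality $2d$ is necessarily $\{\pm w_1,\dots,\pm w_d\}$ with $(w_1,\dots,w_d)$ a basis, so at $m=2d$ you are forced to exhibit bases in which the target element is long. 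That is the idea your proposal is missing and the one the paper uses: pick $a,b$ with $pb-qa=1$, so that $v_1=(p,a,0,\dots,0)$, $v_2=(q,b,0,\dots,0)$, $v_j=e_j$ for $j\ge 3$ form a basis; then the expansion $X=\alpha v_1+\beta v_2+\sum_{j\ge3}x_jv_j$ is unique, $l_S(X)=\vert\alpha\vert+\vert\beta\vert+\sum_{j\ge3}\vert x_j\vert$, and $\alpha p+\beta q=x_1$ forces this to be large. Your projection fallback does not close the gap either, because one still has to realize the Bézout pair $\{\pm p,\pm q\}$ as the first-coordinate image of a generating set with at most $2d$ elements, which again requires this $SL(2,\mathbb{Z})$ completion. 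A second error: from $\alpha u+\beta v=1$ one does \emph{not} get $\vert\alpha\vert+\vert\beta\vert\to\infty$ as $u\to\infty$ (take $v=u+1$: the length is $2$ for every $u$); you must take $v$ enormous relative to $u$, as in the paper's choice $p>\max\{\vert x_1\vert,K+1\}$ and $q>p!$, which gives $\vert\alpha\vert\ge K!-1$.

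Two further points. Your reduction to $g=e_1$ is not justified: an automorphism of $\mathbb{Z}^d$ carries a nonzero $g$ only to $k e_1$ with $k$ the gcd of its coordinates, and the inequality $l_S(kg_0)\le k\,l_S(g_0)$ transfers boundedness from $g_0$ to $kg_0$, i.e.\ in the wrong direction, so unboundedness for the primitive part does not formally yield it for $g$. (This is repairable: run the same Bézout computation on $\alpha p+\beta q=k$, or, as the paper does, treat an arbitrary $X$ after only normalizing $x_1\ge 1$ by an automorphism.) Finally, in part (1) the parameter $m$ is the cardinality bound in $G_{bound}(m)$, not $m(G)$: for $m=2$ the only admissible set is $\{\pm 1\}$, and the ``else'' branch is proved (for $m\ge 4$) by precisely the $S=\{\pm p,\pm q\}$ computation you sketch; so your hedging there is resolved along the lines you indicate, but a final write-up must commit to this reading rather than leave it open.
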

%
%========== FIN PROPOSITION 1 =======================
%========== PREUVE PROPOSITION 1 ===================
\begin{proof}
\hspace{0.1cm}
(1)  Consider $d=1$ and $G=\mathbb{Z}$. The generating set of $\mathbb{Z}$ with cardinality 2 is $S=\{\pm 1\}$. In this case, the length of an integer $n\in \mathbb{Z}\setminus \{0\}$ is $l_{S}(n)=n$ and $G_{bound}(2)=\mathbb{Z}$. \\%
Let $p,q$ be two different prime numbers. There exist $a,b\in \mathbb{Z}$ such that  $ap+bq=n$. For every integers $K>\vert n\vert$,  $p>K+1$ and $q>p!$, we have:
$$\vert a\vert= \dfrac{\vert bq-n\vert}{p}\geq \vert \dfrac{bp!}{p}-\dfrac{n}{p} \vert\geq \vert b(p-1)!-1\vert \geq K!-1\geq K.$$ 
We know that  $S=\{\pm p,\pm q\}$ is a generating set of $\mathbb{Z}$ and $l_{S}(1)=\vert a\vert +\vert b \vert$.  From this, we conclude that for every integers $n\in \mathbb{Z}$ and $K>\vert n\vert$, there exists a generating set $S$ such that $l_{S}(n)>K$. It implies that for $m\geq 4$, $G_{bound}(m)=\{0\}$.

(2) Let $X=(x_{1},\dots ,x_{d})\in \mathbb{Z}^{d}$ non zero i.e there exists $i\in \{1,\dots d\}$ such that $x_{i}\neq 0$.  Using an element of   $SL(d,\mathbb{Z})$, we can  suppose $x_{1}\geq 1$.   Consider $p,q$ two distinct primes, and $a,b\in \mathbb{Z}$ such that $bp-aq=1$. We have  $A=\begin{pmatrix}
p & q \\
a & b
\end{pmatrix}\in SL(2,\mathbb{Z})$ and the map $v\in \mathbb{Z}^{2}\longrightarrow Av\in \mathbb{Z}^{2}$ is an isomorphism. Therefore, there exist $\alpha, \beta \in \mathbb{Z}$ such that $A\begin{pmatrix}
\alpha  \\
\beta
\end{pmatrix}=\begin{pmatrix}
x_{1} \\
x_{2}
\end{pmatrix}$. It implies that $\alpha p+\beta q=x_{1}$.
\\%
For fixed integers $K>1$,  $p>max\{\vert x_{1}\vert,K+1\}$ and $q>p!$, we have $$\vert \alpha\vert= \dfrac{\vert \beta q-x_{1}\vert}{p}\geq \vert \dfrac{\beta p!}{p}-\dfrac{x_{1}}{p} \vert\geq \vert\beta\vert (p-1)!-1\geq K!-1\geq K.$$
Consider the generating set  of $\mathbb{Z}^{d}$ with $2d$ elements: 
%-----------------------------------------------------
$S_{(p,q)}=\{\pm v_{1},\dots, \pm v_{d}\}$ where  $ v_{1}=(p,a,0\dots 0), v_{2}=(q,b,0\dots 0), v_{3}=(0,0,1,0,\dots 0),
\dots,\\ v_{d}=(0,0,\dots ,1).$

With respect to generating set $S_{(p,q)}$,  we have $$X=\alpha v_{1}+\beta v_{2}+ \sum_{j=3}^{j=d} x_{j}v_{j}.$$ For every $K>1$,  consider $(p,q)$ as above. We have 
$$l_{S_{(p,q)}}(X)=\vert \alpha \vert +\vert \beta \vert + \sum_{j=3}^{j=d}\vert x_{j}\vert \geq K.$$
 This implies that for $d\geq 2$ and $m\geq 2d$, $G_{bound}(m)=\{0\}$.

\end{proof}
%=================================================
%=========== PROPOSITION 2 =======================
\begin{prop}\label{prop2}
Let $G=<a,b,c\vert \left[a,b\right]=c, \left[a,c\right]=\left[c,b\right]=1>$ be Heisenberg's group. We have:
\begin{enumerate}
 \item $G_{bound}(4)=\mathbb{Z}=Z(G)$ the center of $G$.
 \item For $d\geq 6$, $G_{bound}(d)$ is trivial.
\end{enumerate}
\end{prop}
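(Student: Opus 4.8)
The plan is to reduce both parts to estimating the word-lengths of powers of the central generator $c$, using that $G_{bound}(d)\subseteq FC(G)$ for all $d\ge m(G)$ (Lemma~\ref{lm6}) and that the $FC$-center of the discrete Heisenberg group is its centre $Z(G)=\langle c\rangle\cong\mathbb{Z}$. The latter I would verify directly: writing elements as triples $(p,q,r)$ with product $(p,q,r)(p',q',r')=(p+p',q+q',r+r'+pq')$, a non-central element has $(p,q)\neq(0,0)$ and is conjugated to infinitely many distinct elements by the powers of $a$ (if $q\neq0$) or of $b$ (if $q=0$, $p\neq0$). Consequently every $G_{bound}(d)$ is a subgroup of $\langle c\rangle$, so to prove $G_{bound}(d)$ trivial it suffices to show $c^{n}\notin G_{bound}(d)$ for all $n\neq0$, and to identify $G_{bound}(4)$ it suffices to locate which powers of $c$ lie in it.

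For (1): since $G$ is torsion-free and $2$-generated, one has $m(G)=4$, and a symmetric generating set of cardinality $\le4$ must be of the form $S=\{x^{\pm1},y^{\pm1}\}$ with $\langle x,y\rangle=G$ (there are no involutions, and a single inverse-closed pair generates a proper cyclic subgroup, so at least two disjoint pairs are needed). The images of $x$ and $y$ then form a basis of $G^{ab}=\mathbb{Z}^{2}$, so with $x=(p_{1},q_{1},r_{1})$ and $y=(p_{2},q_{2},r_{2})$ we get $p_{1}q_{2}-p_{2}q_{1}=\pm1$; the identity $[x,y]=c^{\,p_{1}q_{2}-p_{2}q_{1}}$, valid in the Heisenberg group, gives $[x,y]=c^{\pm1}$ and hence $l_{S}(c)\le4$ for every such $S$. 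Therefore $c\in G_{bound}(4)$, so $\langle c\rangle=Z(G)\subseteq G_{bound}(4)$, and combined with the reverse inclusion from the first paragraph this yields $G_{bound}(4)=Z(G)=\mathbb{Z}$.

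For (2): by the inclusion $G_{bound}(d)\subseteq G_{bound}(6)$ for $d\ge6$ it is enough to show $c^{n}\notin G_{bound}(6)$ for each $n\neq0$. Fix $n\neq0$ and a prime $q>|n|$; for a prime $p>q$ I would use the generating set $S_{p,q}=\{a^{\pm1},b^{\pm p},b^{\pm q}\}$, which has cardinality $6$ and generates $G$ because $\gcd(p,q)=1$ gives $b\in\langle b^{p},b^{q}\rangle$. The key tool is the signed-area formula: for a word $w=s_{1}\cdots s_{k}$ with $s_{i}=(u_{i},v_{i},0)$ the third coordinate of the represented element is $T=\sum_{j}\bigl(\sum_{i<j}u_{i}\bigr)v_{j}$. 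In $S_{p,q}$ the letters $a^{\pm1}$ affect only the $u_{i}$ (by $\pm1$) and the letters $b^{\pm p}$ and $b^{\pm q}$ only the $v_{j}$ (by $\pm p$ or $\pm q$); if a geodesic word for $c^{n}$ uses $k_{1}$ letters of the first kind and $k_{2}$ of the second, then each partial sum $\sum_{i<j}u_{i}$ has absolute value $\le k_{1}$, so collecting terms one can write $T=pC+qD$ with $|C|,|D|\le k_{1}k_{2}$, and representing $c^{n}$ forces $pC+qD=n$. From $q>|n|$ and $p>q$ one gets $C\neq0$, then $q|D|=|n-pC|\ge p|C|-|n|\ge p-|n|$, so $k_{1}k_{2}\ge|D|\ge(p-|n|)/q$ and $l_{S_{p,q}}(c^{n})=k_{1}+k_{2}\ge2\sqrt{k_{1}k_{2}}\ge2\sqrt{(p-|n|)/q}$, which tends to infinity as $p\to\infty$ (with $n$ and $q$ fixed). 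Hence $c^{n}\notin G_{bound}(6)$, so $G_{bound}(6)$ — being a subgroup of $\langle c\rangle$ containing no nontrivial power of $c$ — is trivial, and likewise $G_{bound}(d)$ for every $d\ge6$.

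The commutator identity and the signed-area formula in the Heisenberg group, as well as checking that the sets written down generate $G$ and have the claimed cardinality, are routine bookkeeping. The step that needs real care is the length estimate in (2): one must keep the ``$a$-letters'' and ``$b$-letters'' separate so that the third coordinate is manifestly of the form $pC+qD$ with $|C|,|D|\le k_{1}k_{2}$, and then extract from $pC+qD=n$ a lower bound on the length that is uniform over the whole family $\{S_{p,q}\}_{p}$. This is exactly where it matters that one generator, $b^{\pm p}$, is taken with $p$ much larger than both $q$ and $|n|$, which forces $q|D|\ge p-|n|$; the inequality $k_{1}+k_{2}\ge2\sqrt{k_{1}k_{2}}$ then converts this into the desired growth of $l_{S_{p,q}}(c^{n})$.
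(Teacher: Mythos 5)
Your proof is correct, and in part (2) it takes a genuinely different route from the paper's. For part (1) the two arguments share the same skeleton: every symmetric generating set of cardinality at most $4$ has the form $\{x^{\pm1},y^{\pm1}\}$, the commutator $[x,y]$ generates $Z(G)$ so $l_{S}(c)\le 4$, and the reverse inclusion comes from $G_{bound}(4)\subseteq FC(G)=Z(G)$; you get the key identity $[x,y]=c^{\pm1}$ from the abelianization (the images of $x,y$ form a basis of $\mathbb{Z}^{2}$, and $[x,y]=c^{p_{1}q_{2}-p_{2}q_{1}}$), while the paper instead writes $a,b$ as words in $x,y,c$ and computes $[a,b]=[x,y]^{j}$ to force $[x,y]$ to be $c^{\pm1}$ — your version is a cleaner packaging of the same fact. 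The real divergence is in part (2): the paper takes $S=\{a^{\pm p},a^{\pm q},b^{\pm1}\}$ and lower-bounds $l_{S}(c^{n})$ by $\min\{|\alpha_{u}|+|\beta_{u}| : \alpha_{u}p+\beta_{u}q=u,\ uv=n\}$, which tacitly assumes that an efficient word for $c^{n}$ can be brought to the commutator shape $[a^{u},b^{v}]$; that reduction is not justified there. Your signed-area argument bounds the central coordinate of an \emph{arbitrary} word over $S_{p,q}=\{a^{\pm1},b^{\pm p},b^{\pm q}\}$ as $pC+qD$ with $|C|,|D|\le k_{1}k_{2}$, and extracts the explicit estimate $l_{S_{p,q}}(c^{n})\ge 2\sqrt{(p-|n|)/q}$; this makes the length lower bound rigorous and quantitative where the paper is sketchy, at the modest cost of the bookkeeping with partial sums. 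Both proofs then finish identically via $G_{bound}(d)\subseteq G_{bound}(6)\subseteq FC(G)=Z(G)$ and the computation that non-central elements have infinitely many conjugates.
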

%============ FIN PROPOSITION 2 =================
%============= PREUVE PROPOSITION 2 =============
\begin{proof}
(1) Let  $S=\{x^{\pm 1},y^{\pm 1}\}$ be a generating set of $G$. Set $z=[x,y]$.  Since $G$ is nilpotent of degree 2, $\forall g\in G,$ we have  $[[x,y],g ]=[z,g]=e$. This implies that $z\in Z(G)=\mathbb{Z}$.
\\%
We know that $G/Z(G)=\mathbb{Z}^{2}$.  It induces a surjective homomorphism:
$$\pi :G\longrightarrow G/Z(G)=\mathbb{Z}^{2}.$$
We want to show that $Z(G)$ is generated by $z=[x,y]$.\\%
It is known that $Z(G)$ is generated by $c=[a,b]$ with the presentation above. We have $z\in Z(G)=\mathbb{Z}$ and $c$ a generator of $Z(G)$. It follows that there exists an integer $i$ such that $z=c^{i}$.\\%
Let us show that $g=x^{\alpha}y^{\beta}$ with $\alpha \beta\neq 0$, is not an element of $Z(G)$.\\%
The set $\pi\{x^{\pm 1},y^{\pm 1}\}$ is a generating set of $G/Z(G)=\mathbb{Z}^{2}$. For  $(\alpha,\beta)\neq (0,0)$, $\pi(g)=\alpha \pi(x)+\beta \pi(y)\neq 0$. It implies that $g$ is not an element of $Z(G)$. 
Therefore, for every element $g\in G$, there exist $\alpha,\beta,\gamma \in \mathbb{Z}$ such that $g=x^{\alpha}y^{\beta}c^{\gamma}$.\\%
We can write $a=x^{\alpha_{1}}y^{\beta_{1}}c^{\gamma_{1}}$ and $b=x^{\alpha_{2}}y^{\beta_{2}}c^{\gamma_{2}}$. It follows,
%---------equation-----------------------------
\begin{center}
\begin{equation}\nonumber
\begin{array}{lll}
[a,b]&=&x^{\alpha_{1}}y^{\beta_{1}}x^{\alpha_{2}}y^{\beta_{2}}y^{-\beta_{1}}x^{-\alpha_{1}}y^{-\beta_{2}}x^{-\alpha_{2}}\\
&=&(x^{\alpha_{1}}y^{\beta_{1}}x^{-\alpha_{1}}y^{-\beta_{1}})(y^{\beta_{1}}x^{\alpha_{1}}x^{\alpha_{2}}y^{-\beta_{1}}x^{-\alpha_{1}}x^{-\alpha_{2}})\\
& &x^{\alpha_{2}}x^{\alpha_{1}}y^{\beta_{1}}y^{\beta_{2}}y^{-\beta_{1}}x^{-\alpha_{1}}y^{-\beta_{2}}x^{-\alpha_{2}}
\end{array}
\end{equation}
\end{center}
%-------------------------------------------
%
Since $G$ is nilpotent of degree 2, for every $x,y\in G$ and $\alpha,\beta \in \mathbb{Z}$, we have $[x^{\alpha},y^{\beta}]=x^{\alpha}y^{\beta}x^{-\alpha}y^{-\beta}=[x,y]^{\alpha \beta}$. \\%
It follows that:
%==============================================
%---------equation-----------------------------
\begin{equation}\nonumber
\begin{array}{lll}
x^{\alpha_{1}}y^{\beta_{1}}x^{-\alpha_{1}}y^{-\beta_{1}}&=&z^{u_{1}},\\
y^{\beta_{1}}x^{\alpha_{1}}x^{\alpha_{2}}y^{-\beta_{1}}x^{-\alpha_{1}}x^{-\alpha_{2}}&=&z^{u_{2}}.\\
x^{\alpha_{1}}y^{\beta_{2}}x^{-\alpha_{1}}y^{-\beta_{2}}&=&z^{u_{3}}\\
\end{array}
\end{equation}
%==========================================
Where $u_{1}=\alpha_{1}\beta_{1}$, $u_{2}=-\beta_{1}(\alpha_{1}+\alpha_{2})$ and $u_{3}=\alpha_{1}\beta_{2}$.
Consider $j= u_{1}u_{2}u_{3} $. Using these equations in the expression of $[a,b]$, we obtain the following:\\
%---------equation-----------------------------
%=============================================
\begin{equation}\nonumber
\begin{array}{lll}

[a,b]&=&z^{u_{1}u_{2}}x^{\alpha_{2}}(x^{\alpha_{1}}y^{\beta_{2}}x^{-\alpha_{1}}y^{-\beta_{2}})x^{-\alpha_{2}}\\
&=&z^{u_{1}u_{2}u_{3}}=z^{j}
\end{array}
\end{equation}
%============================================
Accordingly, $c^{ij}=c$. This implies that $\vert i\vert=1$ and $z$ generates $Z(G)$. It follows that for every generating set $S$ of $G$ with $4$ elements, $l_{S}(c)\leq 4$. As a result $Z(G)\subset G_{bound}(4) $.
We recall that $G_{bound}(4)\subset FC(G)$. We will show that $FC(G)=Z(G)$.\\%
 It is known that every element of $G$ has a unique expression of the form $g=a^{i}b^{j}c^{l}$ where $i,j,l\in \mathbb{Z}$. Therefore,\\%
if $j\neq 0$, then $\forall n\in \mathbb{Z}$, 
 $$a^{n}(a^{i}b^{j}c^{l})a^{-n}=a^{i}(a^{n}b^{j}a^{-n})c^{l}.$$ Since $a^{n}b^{j}a^{-n}=c^{nj}b^{j}$, it follows that 

$$a^{n}(a^{i}b^{j}c^{l})a^{-n}=a^{i}(c^{nj}b^{j})c^{l}=a^{i}b^{j}c^{l+nj}.$$
If  $i\neq 0$, then $\forall n\in \mathbb{Z}$ we have $$b^{n}(a^{i}b^{j}c^{l})b^{-n}=a^{i}b^{j}c^{l-ni}.$$
Consider $h=a^{n}b^{m}c^{k}$ a non-trivial element. We have 
%=========     EQUATION  =====================
\begin{equation}\nonumber
\begin{array}{lll}

hgh^{-1}&=&a^{n}b^{m} a^{i}b^{j}c^{l} b^{-m}a^{n}\\
\quad \quad &=&a^{n}(b^{m}( a^{i}b^{j}c^{l}) b^{-m})a^{n}\\
\quad \quad &=&a^{n}( a^{i}b^{j}c^{l-mi})a^{-n}\\
\quad \quad &=&a^{i}b^{j}c^{l-mi+nj}
   
\end{array}
\end{equation}
%================= FIN EQUATION ==================
%
It implies that elements which are not in $Z(G)$ have an infinite number of conjugates. So, they are not in $FC(G)$. We conclude  that $FC(G)=Z(G)$.\\\\%
(2) We know that $G_{bound}(6)<FC(G)$. Consider  two prime number $p,q$.  The set $S=\{a^{\pm p},a^{\pm q},b^{\pm 1}\}$ is a generating set of $G$. Let us prove that for every $n\in \mathbb{Z}$, $c^{n}\not\in G_{bound}(6)$. For every $u,v\in \mathbb{Z}$ such that $uv=n$ we have $c^{n}=[a^{u},b^{v}]$. There exist $\alpha_{u}, \beta_{u} \in \mathbb{Z}$ such that $\alpha_{u} p+\beta_{u} q=u$. 
It follows that $c^{n}=[a^{u},b^{v}]=a^{\alpha_{u} p+\beta_{u} q}b^{v}a^{-\alpha_{u} p-\beta_{u} q}b^{-v}$. Consider $A$ the set of divisors of $n$. This set is finite. We have $l_{S}(c^{n})\geq min \{\vert \alpha_{u}\vert +\vert \beta_{u} \vert ~\vert~u\in A\}$ which tends to infinity when $p$ tends to infinity. 
 Since for every $d\geq 6$, $G_{bound}(d)\subset G_{bound}(6)$, we conclude that $G_{bound}(d)$ is trivial for $d\geq 6$.

\end{proof}
\begin{cor}\label{heis}
The Heisenberg group has trivial $G_{bound}$.
\end{cor}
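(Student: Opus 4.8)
The plan is to deduce Corollary \ref{heis} directly from Proposition \ref{prop2} together with the general inclusion relating $G_{bound}$ and the $G_{bound}(d)$. Recall from the discussion following the definition of $G_{bound}(d)$ that for every $d\geq m(G)$ one has $G_{bound}\subset G_{bound}(d)$; in fact $G_{bound}=\bigcap_{d\geq m(G)}G_{bound}(d)$. So the first step is simply to observe that $G_{bound}\subset G_{bound}(d)$ for \emph{every} admissible $d$, and in particular for the two values of $d$ treated in Proposition \ref{prop2}.

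The second step is to combine the two parts of Proposition \ref{prop2}. Part (2) says $G_{bound}(d)$ is trivial for every $d\geq 6$, so already $G_{bound}\subset G_{bound}(6)=\{e\}$, which finishes the argument. Alternatively, one can note that $G_{bound}\subset G_{bound}(d)$ for all $d$ forces $G_{bound}\subset\bigcap_{d\geq 4}G_{bound}(d)$, and since the intersection is contained in $G_{bound}(6)=\{e\}$ we again conclude $G_{bound}$ is trivial. Either phrasing is a one-line deduction; I would write it using part (2) since that is the cleanest.

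I do not expect any genuine obstacle here: the real content has already been established in Proposition \ref{prop2}, whose proof computed explicitly that the central elements $c^n$ escape every bounded ball once one uses generating sets of the form $\{a^{\pm p},a^{\pm q},b^{\pm 1}\}$ with $p$ large, and that non-central elements lie outside $FC(G)\supset G_{bound}$. The only thing to be careful about is invoking the correct inclusion direction ($G_{bound}\subset G_{bound}(d)$, not the reverse) and checking that $6\geq m(G)$, which is immediate since $G$ is $2$-generated so $m(G)=4$. Thus the proof is:

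\begin{proof}
By Proposition \ref{prop2}(2), $G_{bound}(d)$ is trivial for every $d\geq 6$. Since $6\geq m(G)$, the inclusion $G_{bound}\subset G_{bound}(6)$ holds, hence $G_{bound}$ is trivial.
\end{proof}
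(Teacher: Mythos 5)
Your proof is correct and matches the paper's own argument: both deduce the claim from Proposition \ref{prop2}(2) via the inclusion $G_{bound}\subset G_{bound}(6)$ (the paper phrases it through $G_{bound}=\bigcap_{d\geq m(G)}G_{bound}(d)$, which is the same one-line deduction).
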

\begin{proof}
It follows from $G_{bound}=\cap_{d\geq m(G)} G_{bound}(d)$ and $G_{bound}(6)$ is trivial.
\end{proof}
%======== FIN PREUVE PROPOSITION 2 ==============
%=================================================
\section*{Acknowledgements}
I would like to express my deep gratitude to my supervisor, Anna Erschler, for the advices and help she gives me in my work. I am grateful to Rostislav Grigorchuk, Pierre de La Harpe, Tatiana Smirnova-Nagnibeda and Markus Steenbock for remarks on the preliminary version and discussions that have improved the exposition. I thank Arman Darbinyan for his comments.
%================================================
%=====  REFERENCES ============================= 

\bibliographystyle{plain}
\bibliography{article}

\end{document}